\newtheorem{thm}{Theorem}
\newtheorem{defn}{Definition}
\newtheorem{prop}{Proposition}
\newtheorem{lem}{Lemma}
\newcommand{\perm}{\mathrm{perm}}
\newcommand{\adj}{\mathrm{adj}}
\newcommand{\abs}[1]{\left\vert#1\right\vert}
\newcounter{example}[section]
\newenvironment{example}[1][]{\refstepcounter{example}\par\medskip
   \noindent \textbf{Example~\theexample. #1} \rmfamily}{\medskip}
\begin{document}
\title{Bergman Kernels of Two Dimensional Monomial Polyhedra} 
\author{Rasha Almughrabi}
\begin{abstract}
We compute explicitly the Bergman kernels of all two dimensional monomial polyhedra, a class of domains including the Hartogs triangle and some of its generalizations. The kernel is computed from the representation of such domains as quotient of the product of a disc and punctured disc.
\end{abstract}
\address{Department of Mathematics\\Central Michigan University\\Mt.Pleasant,MI 48859, USA}
\email{almug1ra@cmich.edu}
\subjclass{32A36}
\maketitle

 \section{Introduction}
The Bergman kernel is a fundamental object of interest in modern complex analysis. In general, it is not possible to compute the Bergman kernel of a domain in explicit closed form. The major exceptions are the ball and other bounded symmetric domains \cite{hua}.

Apart from this, it is also possible in principle to use transformation laws of the Bergman kernel to compute the Bergman kernels of domains which arise as quotients of domains for which the Bergman kernel is known. However, such computations lead to complicated algebraic and computational issues.

In this paper, we will explicitly compute the Bergman kernel  of domains belonging to a class called \emph{monomial polyhedra} which arise as quotients of product domains where the factors are discs or punctured discs. Although this problem is interesting in $\mathbb{C}^n$, here we give a full computation for the case $n=2$. A monomial polyhedron can be thought as a generalization of the famous \emph{Hartogs Triangle} $\mathcal{H}(1,-1)= \{ z \in \mathbb{C}^2: \abs{z_1} < \abs{z_2} <1 \}$. They are of current interest in complex analysis, see \cite{bender} and \cite{edho}.

We begin by describing the class of domains that we study. By a \emph{domain} we mean an open connected set in $\mathbb{C}^n$, for $n \geq 1$.
  
\begin{defn}
Let $\mathcal{U} \subset \mathbb{C}^n$ be a bounded domain. We say that $\mathcal{U}$ is a \emph{monomial polyhedron} if there is an integer matrix $B \in M_n(\mathbb{Z})$ such that
\begin{equation}\label{dom1}
\mathcal{U} = \biggl\{ z \in \mathbb{C}^n: \abs{z_1}^{b_1^j}\cdots \abs{z_n}^{b_n^j} <1, 1\leq j \leq n \biggr\},
\end{equation}
where $b_j^i$ stands for the entry of $B$ in the $i$-th row and $j$-th column.
\end{defn}

If $\Omega \subset \mathbb{C}^n$ is a monomial polyhedron then it is a \emph{Reinhardt domain}, that is, if $z \in \Omega$ and $\theta_j \in \mathbb{R}$ for $j=1,\cdots,n$, then we have
\begin{equation}\nonumber
\left( e^{i \theta_1 }z_1, \cdots, e^{i \theta_n }z_n \right) \in \Omega.
\end{equation}

If $\Omega$ is a Reinhardt domain in $\mathbb{C}^n$, its \emph{shadow} is the subset $\abs{\Omega}$ of $\mathbb{R}^n$ given by
\begin{equation}\nonumber
\abs{\Omega} = \{ \left( \abs{z_1}, \cdots, \abs{z_n} \right): z \in \Omega \}.
\end{equation}
In fact, a monomial polyhedron is a so-called a \emph{pseudoconvex domain}, the type of domain most important in complex analysis of several variables. A monomial polyhedron typically has a singularity at origin, see \cite{range}.

Many special examples of computations of Bergman kernel of monomial polyhera are found in the literature.
Bremermann in \cite{bremer} calculated the Bergman kernel for the classical Hartogs triangle. Edholm in \cite{luke1} (see also \cite{edho}) explicitly computed the Bergman kernel for the thin and fat Hartogs triangle. In \cite{austin}, Chakrabarti, Konkel, Mainkar, and Miller give an explicit form for the Bergman kernels of elementary Reinhardt domains. In \cite{chen}, Chen gives a formula for the Bergman kernel for the $n$-dimensional generalized Hartogs triangle. Chen applies the transformation formula to obtain the Bergman kernel of the $n$-dimensional generalized Hartogs triangle. Later in \cite{zhang}, Zhang explicitly calculated the Bergman kernel of the $n$-dimensional generalized Hartogs triangles.

\subsection{Two Examples}
In this section we give two examples of applications of the general formula found in this paper. The Bergman kernels of these examples do not seem to have been noted in the literature before.
\begin{example} \label{example8}
Consider the domain in $\mathbb{C}^2$ given as

\begin{equation}\nonumber
\mathcal{U} = \{ z \in \mathbb{C}^2 : \abs{z_1}^4 < \abs{z_2} < \abs{z_1 } ^{\frac{1}{3} } \}.
\end{equation}
The associated matrix of the domain in $\mathcal{U}$ is given by $B=\left[ \begin{array}{cc}
4 & -1 \\  \nonumber
-1 & 3
\end{array}
\right],$ and the Reinhardt shadow of $\mathcal{U}$ shown below in Figure \ref{here05}.

The Bergman kernel of $\mathcal{U}$, by using Theorem \ref{mainthm1} and Theorem \ref{mainthm2} below, is given by
\begin{equation}\nonumber
\begin{aligned}
K_{\mathcal{U}}(z,w)=  \dfrac{1}{11 \pi^2 }\cdot \dfrac{g(t_1, t_2)}{ \left(t_2-t_1^{4}\right)^2 \left(t_1-t_2^{3} \right)^2},
\end{aligned}
\end{equation}
where
\begin{eqnarray}\nonumber
g(t_1,t_2) &= 7t_2^5 + 6 t_2^6 + 4t_1 t_2^2 + 16 t_1 t_2^3+30 t_1 t_2^4 + 24 t_1 t_2^5 + 10 t_1 t_2^6 + 3 t_1^2 t_2 + 20 t_1^2 t_2^2 + 45 t_1 ^2 t_2^3+ 54 t_1^2 t_2^4 \\ \nonumber
&+ 35 t_1^2 t_2^5 + 8 t_1^2 t_2^6 + 12 t_1^3 t_2 + 42 t_1^3 t_2^2 + 80 t_1^3 t_2^3 + 72 t_1^3 t_2^4 + 40 t_1^3 t_2^5 + 27 t_1^4 t_2 + 70 t_1^4 t_2^2 + 121 t_1^4 t_2^3 \\ \nonumber
&+ 70t_1^4 t_2^4 + 27 t_1^4 t_2^5 + 40 t_1^5 t_2 +72 t_1^5 t_2^2 + 80 t_1^5 t_2^3 + 42 t_1^5 t_2^4 + 12 t_1^5 t_2^5 + 8t_1^6 + 35 t_1^6 t_2 + 54 t_1^6 t_2^2 + 45 t_1^6 t_2^3 \\  \nonumber
&+ 20 t_1^6 t_2^4 + 3 t_1^6 t_2^5 + 10 t_1^7 + 24 t_1^7 t_2 + 30 t_1^7 t_2^2 + 16 t_1^7 t_2^3 +4 t_1^7 t_2^4 + 6t_1^8 + 7 t_1^8 t_2.
\end{eqnarray}

\end{example}

\begin{example} \label{example9}
Consider the domain in $\mathbb{C}^2$ given as
\begin{equation}\nonumber
\mathcal{V} = \{ z \in \mathbb{C}^2 : \abs{z_1}^{1/2} < \abs{z_2} < \abs{z_1 } ^{1/4 } \}.
\end{equation}
The associated matrix of the domain in $\mathcal{V}$ is given by $B=\left[ \begin{array}{cc}
1 & -2 \\  \nonumber
-1 & 4
\end{array}
\right],$ and the Reinhardt shadow of $\mathcal{V}$ shown in Figure \ref{here05}.

The Bergman kernel of $\mathcal{V}$, by using Theorem \ref{mainthm1} and Theorem \ref{mainthm2} below, is given by
\begin{eqnarray}\nonumber
K_{\mathcal{U}}(p,q) = \dfrac{t_2^8 + t_1 t_2^4 + 4 t_1 t_2^5 + t_1 t_2^6 +t_1^2 t_2^2}{2 \pi^2  \left(t_2^2-t_1 \right)^2   \left(t_1-t_2^4 \right)^2}.
\end{eqnarray}
\begin{figure}[h]
\centering
\includegraphics[scale=0.4]{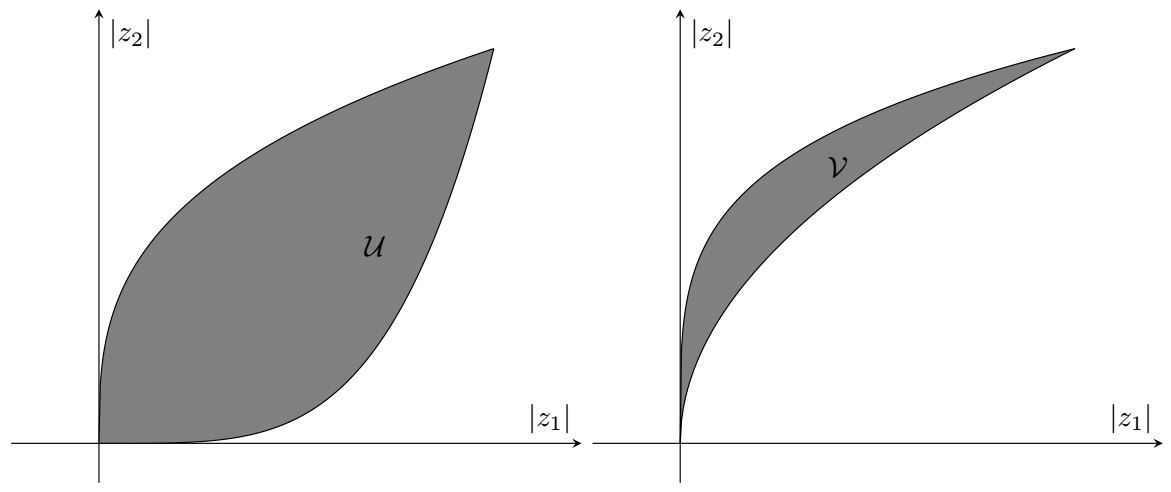}
\caption{The Reinhardt shadow of $\mathcal{U}$ (left) and $\mathcal{V}$ (right). }
\label{here05}
\end{figure}
\end{example}

\subsection{Main Results}\label{fundis}
Let $\displaystyle B=\left[ \begin{array}{cc}
b_1^1 & b_2^1\\  \nonumber
b_1^2 & b_2^2
\end{array}
\right] \in M_2(\mathbb{Z})$ be a $2 \times 2$ integer matrix such that the domain 
\begin{equation}\label{defdef}
\mathcal{U}= \left\{ (z_1,z_2) \in \mathbb{C}^2: |z_1|^{b_1^1} |z_2|^{b_2^1}<1,\: |z_1|^{b_1^2} |z_2|^{b_2^2}<1 \right\}
\end{equation}
is bounded, and therefore a monomial polyhedron in $\mathbb{C}^2$. Define the matrix $A \in M_2(\mathbb{Z})$ by
\begin{equation}\label{matAAA}
A=\left[ \begin{array}{cc}
a_1^1 & a_2^1\\  
a_1^2 & a_2^2
\end{array}
\right]=\left[ \begin{array}{cc}
\dfrac{b_2^2}{\gcd(b_2^2,b_1^2)} & \dfrac{-b_2^1}{\gcd(b_2^1,b_1^1)}\\ 
\dfrac{-b_1^2}{\gcd(b_2^2,b_1^2)} & \dfrac{b_1^1}{\gcd(b_2^1,b_1^1)}
\end{array}
\right].
\end{equation}
It is not difficult to see that \eqref{defdef} represents a bounded domain only if $\det(B)\ne0$. Without loss of generality, assume that $\det(B)>0$, if not, we can interchange the two rows of $B$ and this will not change the domain $\mathcal{U}$.
Our first main result is
\begin{thm}\label{mainthm1}
The Bergman kernel of domain $\mathcal{U}$ defined in \eqref{defdef} is of the form
\begin{equation}\label{summation5}
K_{\mathcal{U}}(z,w)=  \dfrac{1}{\pi^2 \det(A)}\cdot \dfrac{g(t_1,t_2)}{ \left(t_2^{a_2^1}-t_1^{a_2^2}\right)^2 \left(t_1^{a_1^2}-t_2^{a_1^1} \right)^2} ,
\end{equation}
where $z,w\in \mathcal{U}$, $t_j=z_j \overline{w_j}$, $j=1,2$, and $g(t_1,t_2) \in \mathbb{Z}[t_1,t_2]$ is a polynomial with integer coefficients, which will be described in Theorem \ref{mainthm2} below.
\end{thm}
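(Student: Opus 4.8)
\emph{Strategy and geometry.} The plan is to represent $\mathcal U$ as a finite unbranched quotient of a product of a disc and a punctured disc and then push the (explicitly known) Bergman kernel of the covering space down by the transformation law. Arranging $\det B>0$ as permitted, one first checks that the logarithmic shadow of $\mathcal U$ is the interior of $\operatorname{cone}\big((-b_2^2,b_1^2),(b_2^1,-b_1^1)\big)$, whose containment in the closed third quadrant (forced by boundedness of $\mathcal U$) gives $b_1^1,b_2^2\ge 0\ge b_1^2,b_2^1$; consequently the matrix $A$ of \eqref{matAAA} has nonnegative integer entries, each of its columns is primitive, $BA=\operatorname{diag}(d_1,d_2)$ with $d_1,d_2>0$, and $\det A=\det B/\big(\gcd(b_2^2,b_1^2)\gcd(b_2^1,b_1^1)\big)=:N$. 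Set $\Phi(\zeta_1,\zeta_2)=\big(\zeta_1^{a_1^1}\zeta_2^{a_2^1},\,\zeta_1^{a_1^2}\zeta_2^{a_2^2}\big)$; the identity $BA=\operatorname{diag}(d_1,d_2)$ gives $\prod_i|\Phi(\zeta)_i|^{b_i^j}=|\zeta_j|^{d_j}$, so, choosing the product region $X$ (a product of a disc and/or a punctured disc, a coordinate being punctured exactly when needed for $\det\Phi'$ to be zero-free on $X$), $\Phi$ realizes $\mathcal U$, minus a thin analytic set, as the unbranched quotient $X/G$, where $G=\ker\big((S^1)^2\xrightarrow{A}(S^1)^2\big)$ has order $N$ and acts freely. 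By primitivity of the columns of $A$ both projections $G\to S^1$ are injective, so $G=\{(\omega^k,\omega^{sk}):0\le k<N\}$ with $\omega=e^{2\pi i/N}$ and $\gcd(s,N)=1$. (This is in the spirit of the quotient descriptions in \cite{austin}.)

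\emph{Push-down of the kernel.} Since a puncture is removable for $L^2$ holomorphic functions, $A^2(\mathbb D^*)=A^2(\mathbb D)$, so in every case $K_X(\zeta,\eta)=\pi^{-2}(1-\zeta_1\overline{\eta_1})^{-2}(1-\zeta_2\overline{\eta_2})^{-2}$, and $\det\Phi'(\zeta)=N\,\zeta_1^{-1}\zeta_2^{-1}\,\Phi(\zeta)_1\Phi(\zeta)_2$. Substituting into the transformation identity $K_{\mathcal U}\!\big(\Phi(\zeta),\Phi(\eta)\big)\,\det\Phi'(\zeta)\,\overline{\det\Phi'(\eta)}=\sum_{g\in G}K_X(g\zeta,\eta)\,\det g'(\zeta)$, writing $t_i=z_i\overline{w_i}$ and $\sigma_k=\zeta_k\overline{\eta_k}$ (so $t_1=\sigma_1^{a_1^1}\sigma_2^{a_2^1}$, $t_2=\sigma_1^{a_1^2}\sigma_2^{a_2^2}$), expanding the squares into geometric series, and using $\sum_{0\le k<N}\omega^{k(\delta_1+s\delta_2)}=N\cdot[\,N\mid\delta_1+s\delta_2\,]$ together with $A^{\mathsf T}\mathbb Z^2=\{\delta\in\mathbb Z^2:\delta_1+s\delta_2\equiv0\ (\mathrm{mod}\ N)\}$, one obtains
\[
K_{\mathcal U}(z,w)=\frac{1}{\pi^2\det A}\cdot\frac{1}{t_1t_2}\sum_{\delta\in A^{\mathsf T}\mathbb Z^2\cap\mathbb Z_{\ge1}^2}\delta_1\delta_2\,\sigma_1^{\delta_1}\sigma_2^{\delta_2},
\]
where $\sigma_1^{N}=t_1^{a_2^2}t_2^{-a_2^1}$ and $\sigma_2^{N}=t_2^{a_1^1}t_1^{-a_1^2}$; the fractional powers of $t$ hidden in $\sigma_k$ are harmless because every exponent occurring in the sum lies in $A^{\mathsf T}\mathbb Z^2$. (The same formula can be reached without the transformation law: $\mathcal U$ is Reinhardt, so $\{z^\alpha\}$ is an orthogonal basis of $A^2(\mathcal U)$, and the change of variables diagonalizing the log-shadow gives $\|z^\alpha\|^2_{L^2(\mathcal U)}=\pi^2N\big/\big((A^{\mathsf T}\gamma)_1(A^{\mathsf T}\gamma)_2\big)$ with $\gamma=\alpha+(1,1)$, finite exactly when $A^{\mathsf T}\gamma\in\mathbb Z_{\ge1}^2$, reproducing the same sum.)

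\emph{Evaluating the sum.} Split $A^{\mathsf T}\mathbb Z^2\cap\mathbb Z_{\ge1}^2$ into its $N$ classes modulo $N\mathbb Z^2$, with representatives $\rho^{(1)},\dots,\rho^{(N)}\in\{1,\dots,N\}^2$; on each class the sum factors into two one-variable series of the form $\sum_{k\ge0}(a+kN)x^{a+kN}=x^a\big(a+(N-a)x^N\big)(1-x^N)^{-2}$, whence $\sum_{\delta}\delta_1\delta_2\,\sigma_1^{\delta_1}\sigma_2^{\delta_2}=Q(\sigma_1,\sigma_2)\big/\big((1-\sigma_1^{N})^2(1-\sigma_2^{N})^2\big)$ with $Q=\sum_{l}\sigma_1^{\rho^{(l)}_1}\big(\rho^{(l)}_1+(N-\rho^{(l)}_1)\sigma_1^{N}\big)\sigma_2^{\rho^{(l)}_2}\big(\rho^{(l)}_2+(N-\rho^{(l)}_2)\sigma_2^{N}\big)$, a polynomial in $\sigma_1,\sigma_2,\sigma_1^{N},\sigma_2^{N}$ with nonnegative integer coefficients. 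Now $(1-\sigma_1^{N})^2(1-\sigma_2^{N})^2=t_1^{-2a_1^2}t_2^{-2a_2^1}\big(t_2^{a_2^1}-t_1^{a_2^2}\big)^2\big(t_1^{a_1^2}-t_2^{a_1^1}\big)^2$, and each $\sigma_1^{\rho^{(l)}_1}\sigma_2^{\rho^{(l)}_2}$ is a Laurent monomial in $t_1,t_2$ since $\rho^{(l)}\in A^{\mathsf T}\mathbb Z^2$; hence $Q$ turns into a Laurent polynomial with integer coefficients, and collecting everything produces exactly the asserted shape \eqref{summation5} with $g\in\mathbb Z[t_1^{\pm1},t_2^{\pm1}]$.

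\emph{Main obstacle.} Two points carry the weight. First, making the geometry rigorous: that $\Phi$ is proper onto $\mathcal U$ with a thin analytic set removed, that $G$ acts freely there, and that this holds uniformly across the cases in which $X$ is $\mathbb D^2$, $\mathbb D\times\mathbb D^*$, or $(\mathbb D^*)^2$, so that the transformation identity applies in its clean unbranched form. Second — and this is precisely what Theorem~\ref{mainthm2} supplies — upgrading ``$g$ is a Laurent polynomial'' to ``$g\in\mathbb Z[t_1,t_2]$'', and reading off its explicit coefficients: individually the prefactor $t_1^{2a_1^2-1}t_2^{2a_2^1-1}$ produced by $(1-\sigma_1^N)^{-2}(1-\sigma_2^N)^{-2}$ and $1/(t_1t_2)$, as well as the individual monomials of $Q$, can carry negative exponents, and one must organize the sum over the cosets $\rho^{(l)}$ to see these cancel — this bookkeeping is the combinatorial heart of the argument.
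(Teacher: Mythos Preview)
Your approach is correct and takes a genuinely different route from the paper's. The paper does \emph{not} apply Bell's formula directly to $\phi_A$; instead it first establishes a Hermite-type decomposition $A=L^{-1}H$ (Proposition~\ref{prop1}) with $\det L=1$ and $H=\left[\begin{smallmatrix}1&h\\0&\det A\end{smallmatrix}\right]$, so that $\phi_A=\phi_{L^{-1}}\circ\phi_H$ factors as a proper map $\phi_H$ from $\mathbb D\times\mathbb D^*$ onto a \emph{generalized Hartogs triangle} $\mathcal H(\det A,-h)$ followed by a biholomorphism $\phi_{L^{-1}}$. Proposition~\ref{prop2} computes $K_{\mathcal H(k_1,-k_2)}$ once and for all --- this is where Bell's formula is actually invoked, on the simpler map $(p_1,p_2)\mapsto(p_1p_2^{k_2},p_2^{k_1})$ --- and then the biholomorphic change of variables $\phi_L$ is applied to that formula. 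Your coset decomposition of $A^{\mathsf T}\mathbb Z^2\cap\mathbb Z_{\ge1}^2$ modulo $N\mathbb Z^2$ plays the same structural role as the paper's passage through the Hartogs triangle: both reduce the double sum to $N=\det A$ summable pieces. What the paper's route buys is modularity (the Hartogs kernel is of independent interest and is isolated as its own proposition) and, crucially, that the explicit substitution $(\beta_1,\beta_2)\mapsto(\gamma_1,\gamma_2)$ in the final step directly produces the integers $N(A,\gamma)$, $N(AR,\gamma)$ of Theorem~\ref{mainthm2} and the explicit ranges \eqref{range1}--\eqref{range2}, so that $g\in\mathbb Z[t_1,t_2]$ drops out of four concrete linear inequalities rather than being left, as you candidly flag, a residual obstacle. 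What your route buys is conceptual transparency: the cyclicity of $G$ and the lattice $A^{\mathsf T}\mathbb Z^2$ are visible from the outset, and the alternative orthogonal-monomial computation you sketch avoids Bell's formula altogether.

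One small point worth tightening: in the paper the source domain is always $\mathbb D\times\mathbb D^*$, established via Theorem~\ref{benthm} and the factorization $C=AG$; your ``choose $X$ so that $\det\Phi'$ is zero-free'' is more flexibility than is actually needed, and even with it you would still have to verify that $\Phi$ surjects properly onto $\mathcal U$ --- this is precisely where the paper leans on \cite{bender}.
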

In order to describe the polynomial $g$ in Theorem \ref{mainthm1}, we introduce the following function. For integers $k\geq 1$ and $r$, let
\begin{equation}\label{dkr}
D_{k}(r)=
\left\{ 
\begin{array}{ll}
1+r, & 0\leq r \leq k-1,\\
2k-(1+r),& k \leq r \leq 2k-2,\\
0, & r <0 \quad \text{or} \quad r >2k-2.
\end{array}
\right.
\end{equation}
The next lemma, whose proof is clear, explains why we define $D_k$ in this way.
\begin{lem}\label{lemma1}
For a positive integer $k$, we have
\begin{equation}\label{event}
\left(\frac{1-x^k}{1-x}\right)^2=\mathlarger{\sum_{r =0}^{2k-2}}D_k(r) x^r,
\end{equation}
where $D_k$ is as in \eqref{dkr}.
\end{lem}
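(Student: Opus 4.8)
The plan is to reduce the identity to an elementary lattice-point count. The starting point is the factorization $\dfrac{1-x^k}{1-x}=1+x+\cdots+x^{k-1}=\sum_{i=0}^{k-1}x^i$, which holds as an identity of polynomials for every positive integer $k$ (multiply the right-hand side by $1-x$ and telescope). Squaring and collecting powers of $x$,
\begin{equation}\nonumber
\left(\frac{1-x^k}{1-x}\right)^2=\left(\sum_{i=0}^{k-1}x^i\right)\left(\sum_{j=0}^{k-1}x^j\right)=\sum_{i=0}^{k-1}\sum_{j=0}^{k-1}x^{i+j}=\sum_{r\geq 0}c_k(r)\,x^r,
\end{equation}
where $c_k(r)=\#\bigl\{(i,j)\in\mathbb{Z}^2:0\leq i\leq k-1,\ 0\leq j\leq k-1,\ i+j=r\bigr\}$. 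Thus it suffices to check that $c_k(r)=D_k(r)$ for every integer $r$, which in particular shows the top degree is $2k-2$.

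For the count I would eliminate $j=r-i$: a valid pair is determined by $i$, and the inequalities $0\leq i\leq k-1$ together with $0\leq r-i\leq k-1$ are equivalent to $\max(0,r-k+1)\leq i\leq\min(k-1,r)$. Hence $c_k(r)=0$ when $r<0$ or $r>2k-2$ (the range for $i$ is then empty), while for $0\leq r\leq 2k-2$ one has $c_k(r)=\min(k-1,r)-\max(0,r-k+1)+1$. Splitting into the two cases: if $0\leq r\leq k-1$ then $\min(k-1,r)=r$ and $\max(0,r-k+1)=0$, giving $c_k(r)=1+r$; if $k\leq r\leq 2k-2$ then $\min(k-1,r)=k-1$ and $\max(0,r-k+1)=r-k+1$, giving $c_k(r)=2k-(1+r)$. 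Comparing with the definition \eqref{dkr} of $D_k$ yields $c_k(r)=D_k(r)$, and hence \eqref{event}.

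There is essentially no obstacle here, which is why the statement was described as clear; the only point needing a moment's care is bookkeeping the $\min$/$\max$ at the boundary. As a sanity check, the two branches partition $\{0,1,\dots,2k-2\}$ with no overlap, and the peak value $D_k(k-1)=k$ is exactly the number of representations $k-1=i+j$ with $0\leq i,j\leq k-1$, namely $(0,k-1),(1,k-2),\dots,(k-1,0)$; the small cases $k=1$ (giving $1$) and $k=2$ (giving $1+2x+x^2$) also confirm the formula.
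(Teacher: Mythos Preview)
Your argument is correct and is exactly the standard verification the paper has in mind; the paper does not actually write out a proof, only remarking that the lemma is clear. Your Cauchy-product/lattice-point count is the natural way to make that remark precise, so there is nothing to add.
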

Beside $D_k$, we need the following notations.
\begin{enumerate}
\item The permanent of the matrix $P=\left[ \begin{array}{cc}
p_1^1 & p_2^1\\  \nonumber
p_1^2 & p_2^2
\end{array}
\right]$ is denoted by $\perm(P)$.

Recall that $\perm(P)= p_1^1 p_2^2 +p_2^1 p_1^2.$
\item  We set for a matrix $P$ and $\gamma=(\gamma_1, \gamma_2) \in \mathbb{N}^2$, 
\begin{equation}\label{funnn}
N(P,\gamma)= p_1^1 \gamma_1 + p_1^2 \gamma_2 -2p_1^1 p_1^2 + p_1^1+ p_1^2-1 -\perm(P) +\abs{\det(P)}.
\end{equation}
(In this paper, $\mathbb{N}$ is the set of positive integers).
\item Let $R$ denote the matrix
\begin{equation}
R=\left[ \begin{array}{cc}
0 & 1\\  \nonumber
1 & 0
\end{array}
\right].
\end{equation}
\end{enumerate}
\begin{thm}\label{mainthm2}
In formula \eqref{summation5} we have, with above notations

\begin{equation}\nonumber
g(t_1,t_2) = \mathlarger{\sum\limits_{\gamma \in \mathbb{N}^2}}D_{\det(A)}(N(A,\gamma))D_{\det(A)}(N(AR,\gamma))t_1^{\gamma_1} t_2^{\gamma_2},
\end{equation}

where the sum is finite, in fact, the coefficients of $g$ can be nonzero only if
\begin{eqnarray}\label{range1}
\begin{aligned}
0 \leq \gamma_1 \leq 2a_1^2 +2a_2^2 -2, \\
0 \leq \gamma_2 \leq 2 a_1^1 + 2 a_2^1  - 2.
\end{aligned}
\end{eqnarray}
 
\end{thm}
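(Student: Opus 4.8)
The plan is to prove Theorems~\ref{mainthm1} and~\ref{mainthm2} simultaneously, by exhibiting $\mathcal U$ as the image of an explicit proper holomorphic \emph{monomial} map from a product of (possibly punctured) discs and then running the transformation law of the Bergman kernel. The first step is to check that
\[
\Phi(\zeta_1,\zeta_2)=\bigl(\zeta_1^{a_1^1}\zeta_2^{a_2^1},\ \zeta_1^{a_1^2}\zeta_2^{a_2^2}\bigr),
\]
with $A=(a_j^i)$ as in \eqref{matAAA}, is a proper holomorphic map of multiplicity $\det A$ from $\mathbb{D}\times\mathbb{D}^{*}$ (or, according to which of $z_1,z_2$ is forced to be nonzero on $\mathcal U$, from $\mathbb{D}^{2}$ or $\mathbb{D}^{*}\times\mathbb{D}^{*}$) onto $\mathcal U$, with group of deck transformations
\[
\Gamma=\bigl\{(\omega_1,\omega_2)\in(S^{1})^{2}:\ \omega_1^{a_1^1}\omega_2^{a_2^1}=\omega_1^{a_1^2}\omega_2^{a_2^2}=1\bigr\},\qquad \abs{\Gamma}=\det A,
\]
acting by $\zeta\mapsto(\omega_1\zeta_1,\omega_2\zeta_2)$. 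Surjectivity and properness follow because, in logarithmic coordinates, $A$ carries the shadow $\{u_1<0,u_2<0\}$ of the product onto the shadow $\{Bu<0\}$ of $\mathcal U$: the columns of $A$ are exactly the columns of the adjugate of $B$ made primitive, and the normalization $\det B>0$ fixes orientations and makes $\det A>0$.

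Since $A^{2}(\mathbb{D}^{*})=A^{2}(\mathbb{D})$, the Bergman kernel of the source is $\frac1{\pi^{2}}(1-\zeta_1\bar\eta_1)^{-2}(1-\zeta_2\bar\eta_2)^{-2}$. The complex Jacobian of a monomial map is a constant times a monomial, here $J_\Phi(\zeta)=(\det A)\,z_1z_2/(\zeta_1\zeta_2)$ and $J_\sigma=\omega_1\omega_2$ for $\sigma=(\omega_1,\omega_2)\in\Gamma$; substituting into Bell's transformation formula and setting $s_j=\zeta_j\bar\eta_j$, $t_j=z_j\bar w_j$ (so that $t_j=s_1^{a_1^j}s_2^{a_2^j}$) collapses everything, after cancelling the Jacobian monomials, to
\[
K_{\mathcal U}(z,w)=\frac{1}{\pi^{2}(\det A)\,t_1t_2}\sum_{\substack{(a,b)\in A^{\top}\mathbb{Z}^{2}\\ a\ge1,\ b\ge1}}ab\,s_1^{a}s_2^{b}.
\]
The condition $(a,b)\in A^{\top}\mathbb{Z}^{2}$ is a character condition on the group dual to $\mathbb{Z}^{2}/A^{\top}\mathbb{Z}^{2}$, which is again $\Gamma$, so orthogonality turns the sum into $\frac1{\det A}\sum_{\omega\in\Gamma}\frac{\omega_1 s_1}{(1-\omega_1 s_1)^{2}}\cdot\frac{\omega_2 s_2}{(1-\omega_2 s_2)^{2}}$.

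The combinatorial heart is the identity
\[
\Bigl(\frac{1-s^{d}}{1-\omega s}\Bigr)^{2}=\Bigl(\sum_{j=0}^{d-1}(\omega s)^{j}\Bigr)^{2}=\sum_{r=0}^{2d-2}D_{d}(r)\,\omega^{r}s^{r}\qquad(\omega^{d}=1),
\]
which is just Lemma~\ref{lemma1} applied to $\omega s$, and is applicable because every $\omega\in\Gamma$ satisfies $\omega^{d}=1$ with $d:=\det A=\abs{\Gamma}$. Multiplying the kernel above and below by $(1-s_1^{d})^{2}(1-s_2^{d})^{2}$, using this identity in each variable, and invoking character orthogonality on $\Gamma$ once more, yields
\[
(1-s_1^{d})^{2}(1-s_2^{d})^{2}\!\!\sum_{\substack{(a,b)\in A^{\top}\mathbb{Z}^{2}\\ a,b\ge1}}\!\! ab\,s_1^{a}s_2^{b}=\!\!\sum_{\substack{r,r'\ge0\\ (r+1,r'+1)\in A^{\top}\mathbb{Z}^{2}}}\!\! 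D_{d}(r)D_{d}(r')\,s_1^{r+1}s_2^{r'+1}.
\]
Because $(1-s_1^{d})^{-2}=t_2^{2a_2^1}\bigl(t_2^{a_2^1}-t_1^{a_2^2}\bigr)^{-2}$ and $(1-s_2^{d})^{-2}=t_1^{2a_1^2}\bigl(t_1^{a_1^2}-t_2^{a_1^1}\bigr)^{-2}$, while writing $(r+1,r'+1)=A^{\top}\delta$ turns $s_1^{r+1}s_2^{r'+1}$ into the genuine monomial $t_1^{\delta_1}t_2^{\delta_2}$, this already produces \eqref{summation5} with
\[
g(t_1,t_2)=t_1^{2a_1^2-1}t_2^{2a_2^1-1}\sum_{\delta}D_{d}\bigl(a_1^1\delta_1+a_1^2\delta_2-1\bigr)\,D_{d}\bigl(a_2^1\delta_1+a_2^2\delta_2-1\bigr)\,t_1^{\delta_1}t_2^{\delta_2},
\]
a finite sum since $D_{d}$ is supported on $\{0,\dots,2d-2\}$. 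Finally, the substitution $\delta_1=\gamma_1-2a_1^2+1$, $\delta_2=\gamma_2-2a_2^1+1$ together with the identity $-\perm(A)+\abs{\det A}=-2a_1^2a_2^1$ (and $\perm(AR)=\perm(A)$, $\abs{\det(AR)}=\abs{\det A}$) rewrites the two arguments of $D_{d}$ as exactly $N(A,\gamma)$ and $N(AR,\gamma)$, which is the asserted formula for $g$; the range \eqref{range1} then drops out by solving the two strip inequalities $0\le N(A,\gamma)\le 2\det A-2$, $0\le N(AR,\gamma)\le 2\det A-2$ for $\gamma_1$ and $\gamma_2$.

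The two points requiring genuine care are (a) the first step — proving that $\Phi$ is proper and onto $\mathcal U$ with precisely the stated deck group, and deciding which factors are punctured; this is where boundedness of $\mathcal U$ and $\det B>0$ enter, and it is really the representation of monomial polyhedra as finite quotients of product domains that does the work — and (b) the closing index bookkeeping, namely verifying that after the monomial substitution $s\mapsto t$ and the shift in $\delta$ the exponents fed to $D_{\det A}$ are $N(A,\gamma)$ and $N(AR,\gamma)$ on the nose, with the matrix $R$ appearing because interchanging the Laurent variables $s_1\leftrightarrow s_2$ amounts to replacing $A$ by $AR$. The Hartogs triangle ($\det A=1$, $g=t_2$) and Examples~\ref{example8}--\ref{example9} serve as checks on the constant $\tfrac1{\pi^{2}\det A}$ and on the form of $g$.
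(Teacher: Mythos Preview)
Your argument is correct and complete in outline, but it follows a genuinely different route from the paper. The paper does \emph{not} apply Bell's formula directly to the full deck group of $\phi_A$. Instead it first proves a Hermite--type decomposition (Proposition~\ref{prop1}): there are $L\in GL_2(\mathbb Z)$ with $\det L=1$ and an upper--triangular $H=\begin{pmatrix}1&h\\0&\det A\end{pmatrix}$ with $0\le h<\det A$, $\gcd(h,\det A)=1$, such that $A=L^{-1}H$. Then $\phi_A=\phi_{L^{-1}}\circ\phi_H$, where $\phi_H(\mathbb D\times\mathbb D^*)$ is a generalized Hartogs triangle $\mathcal H(\det A,-h)$; the kernel of this intermediate domain is computed separately (Proposition~\ref{prop2}) via Bell's formula with a \emph{cyclic} deck group, and then pulled back to $\mathcal U$ by the biholomorphism $\phi_{L^{-1}}$. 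The remaining work is a rather lengthy simplification eliminating $\ell_1,\ell_2,h$ from the answer and recognising $N(A,\gamma)$, $N(AR,\gamma)$.

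Your direct approach bypasses the Hermite step and the Hartogs--triangle intermediary entirely: one application of Bell's formula with the full group $\Gamma\cong\mathbb Z^{2}/A\mathbb Z^{2}$, two uses of character orthogonality, and Lemma~\ref{lemma1}. This is cleaner and makes the appearance of $A^{\top}\mathbb Z^{2}$ and hence of $N(A,\gamma)$, $N(AR,\gamma)$ transparent; it also generalises more readily to $n>2$. What the paper's route buys is the kernel of $\mathcal H(k_1,-k_2)$ as an independent result of interest, and it avoids having to describe $\Gamma$ abstractly (the cyclic case needs only explicit roots of unity). Both proofs hinge on the same combinatorial identity of Lemma~\ref{lemma1} and on the properness of $\phi_A$, which you correctly flag as the point requiring care; the paper imports this from \cite{bender} (Theorem~\ref{benthm}) together with a short factorisation argument showing $\phi_A$ inherits properness from $\phi_C$.
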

In fact, one can be more precise about the set of indices $\gamma \in \mathbb{N}^2$ such that the coefficient of $t_1^{\gamma_1} t_2^{\gamma_2}$ in $g(t_1,t_2)$ can be nonzero. Such $\gamma$ must satisfy the following inequalities
\begin{eqnarray}\label{range2}
\begin{aligned}
0 \leq a_1^1 \gamma_1 + a_1^2 \gamma_2 -2a_1^1 a_1^2 +a_1^1 +a_1^2 -2a_1^2 a_2^1 -1 \leq 2\det{(A)}-2,\\
0 \leq a_2^1 \gamma_1 + a_2^2 \gamma_2 -2a_1^2 a_2^1 +a_2^1 +a_2^2 -2 a_2^1 a_2^2 -1 \leq 2\det{(A)} -2.
\end{aligned}
\end{eqnarray}
Graphically, the region that provides a solution of both inequalities in \eqref{range2} is enclosed by a parallelogram.
\subsection{Acknowledgments}
I gratefully acknowledge my thesis advisor Professor D. Chakrabarti for the helpful comments and I thank him for the supportive suggestions. I also acknowledge the support from the dissertation committee: Prof. T. Gilsdorf, Prof. S. Narayan, and Prof. Y. Zeytuncu.

\section{Preliminary Results about Matrices}
In order to prove the main results in Section \ref{fundis}, we need the following basic lemma and a proposition that decomposes integer matrices. The proof of the following observation is trivial.
\begin{lem}\label{lemma}
Let $\Lambda \in \mathbb{M}_2(\mathbb{Z})$ be such that $\det(\Lambda)=1$. If for $p,q,r,s \in \mathbb{Z}$, we have
\begin{equation}\nonumber
\left[ 
\begin{array}{c}
p\\  
q
\end{array}
\right]= \Lambda \left[ 
\begin{array}{c}
r\\ 
s
\end{array}
\right], \quad \text{then}\; \gcd(p,q)=\gcd(r,s).
\end{equation}
\end{lem}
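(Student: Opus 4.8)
The plan is to exploit the fact that $\det(\Lambda)=1$ makes $\Lambda$ invertible over $\mathbb{Z}$, so that the relation $(p,q)^{T}=\Lambda (r,s)^{T}$ is symmetric in the two pairs. The core fact I would isolate first is an elementary divisibility principle: if $M=\begin{bmatrix} m_1^1 & m_2^1\\ m_1^2 & m_2^2\end{bmatrix}\in M_2(\mathbb{Z})$ and $\begin{bmatrix} a\\ b\end{bmatrix}=M\begin{bmatrix} c\\ d\end{bmatrix}$, then $a=m_1^1 c+m_2^1 d$ and $b=m_1^2 c+m_2^2 d$ are integer linear combinations of $c$ and $d$, so $\gcd(c,d)$ divides both $a$ and $b$, whence $\gcd(c,d)\mid\gcd(a,b)$.

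Applying this principle with $M=\Lambda$, $(c,d)=(r,s)$, $(a,b)=(p,q)$ gives $\gcd(r,s)\mid\gcd(p,q)$ at once. For the reverse divisibility I would note that, since $\det(\Lambda)=1$, we have $\Lambda^{-1}=\adj(\Lambda)$, and in the $2\times 2$ case $\adj(\Lambda)=\begin{bmatrix} m_2^2 & -m_2^1\\ -m_1^2 & m_1^1\end{bmatrix}$ has integer entries; hence $\Lambda^{-1}\in M_2(\mathbb{Z})$ and $\begin{bmatrix} r\\ s\end{bmatrix}=\Lambda^{-1}\begin{bmatrix} p\\ q\end{bmatrix}$. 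Feeding this into the same principle with $M=\Lambda^{-1}$ yields $\gcd(p,q)\mid\gcd(r,s)$. Two nonnegative integers that divide each other are equal, so $\gcd(p,q)=\gcd(r,s)$, which completes the argument.

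There is essentially no obstacle here — as the surrounding text already flags, the observation is trivial — but two points of hygiene are worth recording. One should fix the convention that $\gcd$ denotes the nonnegative generator of the ideal $(a,b)\subset\mathbb{Z}$, so that mutual divisibility forces equality with no sign ambiguity; and one should dispatch the degenerate case $(r,s)=(0,0)$ (equivalently $(p,q)=(0,0)$) separately, where both sides equal $0$ under the convention $\gcd(0,0)=0$. With these conventions in place, the computation above is complete and purely mechanical.
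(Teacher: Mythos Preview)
Your argument is correct and is precisely the standard one: integer matrices carry divisors forward, and $\det(\Lambda)=1$ makes $\Lambda^{-1}=\adj(\Lambda)$ integral so the same works in reverse. The paper itself does not spell out a proof at all---it simply declares the observation trivial---so your write-up is exactly the routine verification the paper is gesturing at, and there is nothing to compare.
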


The next proposition is closely related to \emph{Hermite decomposition} of integer matrices. For more details, see \cite{new}.
\begin{prop}\label{prop1}
Let $A = \left[
\begin{array}{cc}
a & b\\  \nonumber
c & d
\end{array}
\right] \in \mathbb{M}_2(\mathbb{Z})$, $\det(A) >0$, $\gcd(a,c)= \gcd(b,d)=1$. Then there are unique numbers $\ell_1, \ell_2 \in \mathbb{Z}$ such that
\begin{equation}\label{LA=H}
\left[ 
\begin{array}{cc}
\ell_1 & \ell_2 \\ 
-c & a
\end{array}
\right] A = \left[ 
\begin{array}{cc}
1 & h\\ 
0 & \det(A)
\end{array}
\right], \quad \text{where} \quad h=\ell_1b+\ell_2d,
\end{equation}
and
\begin{enumerate}
\item[(1)] $0 \leq h< \det(A)$ and $\gcd(h,\det(A))=1$,
\item[(2)]  if $L=\left[ 
\begin{array}{cc}
\ell_1 & \ell_2 \\  \nonumber
-c & a
\end{array}
\right]$, then $\det(L)=1$.
\end{enumerate}
\end{prop}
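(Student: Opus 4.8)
The plan is to prove the existence and uniqueness of $\ell_1,\ell_2$ by a direct elementary argument using the hypothesis $\gcd(a,c)=1$, and then to verify properties (1) and (2) by straightforward computation with determinants and the division algorithm. First I would observe that the matrix equation \eqref{LA=H} imposes conditions only through its first row, since the second row of $LA$ is $(-ca + ac, \, -cb + ad) = (0, \det(A))$ automatically, giving the claimed bottom row with no constraints. So the content is: find $\ell_1,\ell_2 \in \mathbb{Z}$ with $\ell_1 a + \ell_2 c = 1$. Since $\gcd(a,c)=1$, Bézout's identity provides such a pair, and the general solution is $(\ell_1 + tc,\, \ell_2 - ta)$ for $t \in \mathbb{Z}$. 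This is where uniqueness must be pinned down: I would use the freedom in $t$ to arrange $0 \le h < \det(A)$, where $h = \ell_1 b + \ell_2 d$; replacing $(\ell_1,\ell_2)$ by $(\ell_1 + tc, \ell_2 - ta)$ changes $h$ to $h + t(cb - ad) = h - t\det(A)$, so exactly one choice of $t$ lands $h$ in the interval $[0,\det(A))$. This simultaneously proves existence of a pair satisfying (1) and uniqueness of the pair once we demand $0 \le h < \det(A)$.

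Next I would verify property (2). With $\ell_1 a + \ell_2 c = 1$, compute
\begin{equation}\nonumber
\det(L) = \det\!\left[\begin{array}{cc} \ell_1 & \ell_2 \\ -c & a \end{array}\right] = \ell_1 a + \ell_2 c = 1,
\end{equation}
so $\det(L)=1$ is immediate. Then $\det(LA) = \det(L)\det(A) = \det(A)$, which is consistent with the right-hand side of \eqref{LA=H} having determinant $1 \cdot \det(A) - h \cdot 0 = \det(A)$, a useful sanity check. For the coprimality claim $\gcd(h,\det(A))=1$ in (1), I would apply Lemma \ref{lemma}: since $\det(L)=1$ and
\begin{equation}\nonumber
\left[\begin{array}{c} 1 \\ h \end{array}\right] = L \left[\begin{array}{c} a \\ b \end{array}\right],
\end{equation}
we get $\gcd(1,h) = \gcd(a,b) = 1$ — but that is trivially true and not what we want. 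Instead I would read off coprimality from the columns: the second column of $LA$ is $(h, \det(A))^{T} = L(b,d)^{T}$, and $\gcd(b,d)=1$ by hypothesis, so Lemma \ref{lemma} gives $\gcd(h,\det(A)) = \gcd(b,d) = 1$, as required.

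Finally I would assemble the pieces: existence of some Bézout pair, adjustment by the one-parameter family to force $h \in [0,\det(A))$, uniqueness of the resulting pair, and then the two auxiliary properties via the determinant computation and Lemma \ref{lemma}. I expect the main (though still modest) obstacle to be the bookkeeping around uniqueness: one must be careful that the stated uniqueness is uniqueness \emph{subject to} the normalization $0 \le h < \det(A)$ (without it, there is a $\mathbb{Z}$-family), and that the map $t \mapsto h - t\det(A)$ really does hit the half-open interval exactly once — this is just the division algorithm, but it is the step where the hypothesis $\det(A) > 0$ is used in an essential way (it guarantees the interval is nonempty and that "reducing mod $\det(A)$" makes sense with the stated sign convention).
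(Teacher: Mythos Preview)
Your proposal is correct and follows essentially the same route as the paper's proof: both start from a B\'ezout pair for $(a,c)$, use the one-parameter family $(\ell_1+tc,\,\ell_2-ta)$ together with the division algorithm (relying on $\det(A)>0$) to normalize $h$ into $[0,\det(A))$ and obtain uniqueness, compute $\det(L)=\ell_1 a+\ell_2 c=1$ directly, and then apply Lemma~\ref{lemma} to the second column $L(b,d)^{T}=(h,\det(A))^{T}$ to conclude $\gcd(h,\det(A))=\gcd(b,d)=1$. The only cosmetic difference is that the paper first fixes an initial pair and then explicitly multiplies by $\left[\begin{smallmatrix}1&-q\\0&1\end{smallmatrix}\right]$ to adjust $h$, whereas you phrase the same adjustment as choosing $t$ in the general solution.
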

\begin{proof} Since $\gcd(a,c)=1$, there exist integers $\alpha, \gamma$ such that
\begin{equation}\label{gcd}
\alpha a + \gamma c =1.
\end{equation}
Define $L_1=\left[ 
\begin{array}{cc}
\alpha & \gamma \\  \nonumber
-c & a
\end{array}
\right]$, then $L_1 A = \left[ 
\begin{array}{cc}
\alpha & \gamma \\ 
-c & a
\end{array}
\right] \left[ 
\begin{array}{cc}
a & b\\  
c & d
\end{array}
\right]=\left[ 
\begin{array}{cc}
1 & h_1\\ 
0 & \det(A)
\end{array}
\right],$ where $h_1=\alpha b +\gamma d$.

Since $\det(A)>0$, by the Division Algorithm there exist unique integers $q$ and $h$ such that
\begin{equation}\nonumber
h_1 =q\det(A) +h, \quad  0\leq h < \det(A).
\end{equation}
Let $\ell_1 = \alpha +qc$ and $\ell_2 = \gamma-aq$, and as in the statement of the proposition and recall $L= \left[ 
\begin{array}{cc}
\ell_1 & \ell_2 \\ 
-c & a
\end{array}
\right]$. Observe that $\det(L)=\ell_1 a +\ell_2 c = (\alpha +qc)a +(\gamma-aq)c= \alpha a+\gamma c =1$ by \eqref{gcd} which proves condition (2).

Observe that
\begin{eqnarray}\nonumber
\left[ 
\begin{array}{cc}
1 & h\\  \nonumber
0 & \det(A)
\end{array}
\right] &=& \left[ 
\begin{array}{cc}
1 & h_1 -q\det(A) \\  \nonumber
0 & \det(A)
\end{array}
\right] =  \left[ 
\begin{array}{cc}
1 & -q\\  \nonumber
0 & 1
\end{array}
\right] \left[ 
\begin{array}{cc}
1 & h_1 \\  \nonumber
0 & \det(A)
\end{array}
\right] =\left[ 
\begin{array}{cc}
1 & -q\\  \nonumber
0 & 1
\end{array}
\right] L_1 A \\ \nonumber
&=& \left[ 
\begin{array}{cc}
1 & -q\\  \nonumber
0 & 1
\end{array}
\right] \left[ 
\begin{array}{cc}
\alpha & \gamma \\  \nonumber
-c & a
\end{array}
\right]A =\left[ 
\begin{array}{cc}
\alpha +qc & \gamma-aq\\  \nonumber
-c & a
\end{array}
\right] A = \left[ 
\begin{array}{cc}
\ell_1 & \ell_2 \\  \nonumber
-c & a
\end{array}
\right] A =LA,
\end{eqnarray}
and \eqref{LA=H} is satisfied. So $h=\ell_1b + \ell_2d$. Since we already know that $0 \leq h < \det(A)$, the first part of condition (1) is proven.

We claim that $\gcd(h, \det(A))=1$. We can write the two equations $h=\ell_1b +\ell_2 d$ and $\det(A)=ad-cb$ in the following system
\begin{equation}\nonumber
\left[ 
\begin{array}{c}
h\\  
\det(A)
\end{array}
\right]= L \left[ 
\begin{array}{c}
b\\ 
d
\end{array}
\right].
\end{equation}
Since $\det(L)=1$ and $\gcd(b,d)=1$, then by Lemma \ref{lemma} above we have $\gcd(h,\det(A))=\gcd(b,d)=1$ which proves the second part of condition (1).

To complete the proof, we must show that $\ell_1$ and $\ell_2$ are the only numbers with properties (1) and (2). To do this, let
\begin{equation}\nonumber
\mathbb{S}=\{ (x,y)\in \mathbb{Z}^2: ax+cy=1, \: \: \: 0 \leq bx+dy < \det(A) \}.
\end{equation}
This set is nonempty because $(\ell_1,\ell_2) \in \mathbb{S}$. This implies that the general solution of the linear Diophantine equation $ax+cy=1$ is
\begin{eqnarray}\label{l11}
x= \ell_1 +nc, \quad y= \ell_2 -na,
\end{eqnarray}
where $n \in \mathbb{Z}$. If $(x,y)\in \mathbb{S}$, then $0 \leq bx+dy < \det(A)$, i.e.
\begin{eqnarray}\nonumber
0 \leq b(\ell_1 +nc) +d(\ell_2 -na) < \det(A). \nonumber
\end{eqnarray}
Simplifying the above and using $h=\ell_1b+\ell_2d$, we see that
\begin{eqnarray}\nonumber
0 \leq h-n\det(A) < \det(A),\quad \text{i.e.}\quad n\det(A) \leq h <(n+1)\det(A).
\end{eqnarray}
But we know that
\begin{equation}\label{inequ}
0 \leq h < \det(A).
\end{equation}
Notice that the sets $\{ \lambda \in \mathbb{N} \mid n\det(A) \leq \lambda <(n+1)\det(A) \}$ are disjoint for different integers $n$, so $h$ belongs to exactly one of them, which therefore corresponds to $n=0$ by \eqref{inequ}. So, $n=0$ and by \eqref{l11} we have $x=\ell_1$ and $y=\ell_2$. So $\ell_1, \ell_2$ are unique.
\end{proof}

\section{Bergman Kernel of Generalized Hartogs Triangle}
In this section, we compute the Bergman kernel of one particular $2$-dimensional monomial polyhedron, namely, the generalized Hartogs triangle $\mathcal{H}(k_1,-k_2)$ defined by
\begin{equation}\nonumber
\mathcal{H}(k_1,-k_2)= \{ (z_1,z_2) \in \mathbb{C}^2: |z_1|^{k_1} < |z_2|^{k_2} < 1 \},
\end{equation}
Edholm in \cite{luke1} (see also \cite{edho}) found the Bergman kernel of $\mathcal{H}(1,-k_2)$ and $\mathcal{H}(k_1,-1)$. In \cite{austin}, Chakrabarti, Konkel, Mainkar, and Miller give an explicit form for the Bergman kernels of generalized Hartogs triangle in $\mathbb{C}^n$, which are special cases of elementary Reinhardt domain.

The classical Hartogs triangle $\mathcal{H}(1,-1)$ is an important example in several complex variables. It is a well-known nonsmooth pseduoconvex domain.
\begin{prop}\label{prop2}
Let $k_1,k_2$ be positive integers such that $\gcd(k_1,k_2)=1$ and $0 \leq k_2 < k_1$. Then the Bergman kernel of $\mathcal{H}(k_1,-k_2)$ is given by
\begin{eqnarray}\nonumber
K_{\mathcal{H}(k_1,-k_2)}(z,w)=\dfrac{\mathlarger {\sum\limits_{\beta \in \mathbb{N}^2}}D_{k_1}(\beta_1)D_{k_1}(k_1\beta_2 + \beta_1 k_2 +k_1 +k_2 -1 -2k_1 k_2)t_1^{\beta_1} t_2^{\beta_2}}
{\pi^2 k_1 \left(t_2^{k_2}-t_1^{k_1}\right)^2  \left(1-t_2\right)^2},
\end{eqnarray}
where the sum is finite, $t_i=z_i \overline{w_i}$ for $i=1,2$, and a term the sum in the numerator can be nonzero only if  $0 \leq \beta_1 \leq 2k_1 -2$, $0 \leq \beta_2 \leq 2k_2$.
\end{prop}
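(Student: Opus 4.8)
The plan is to expand the kernel in the monomial basis, collapse the resulting double power series in two rounds of geometric summation, and then match the numerator against the claimed $D_{k_1}$‑sum using Lemma~\ref{lemma1}.

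First, since $\mathcal H(k_1,-k_2)$ is a bounded Reinhardt domain, its square‑integrable monomials $z_1^{\alpha_1}z_2^{\alpha_2}$ form a complete orthogonal system in $A^2(\mathcal H(k_1,-k_2))$. Writing $z_j=\rho_j e^{i\phi_j}$, integrating out $\phi_1,\phi_2$ and then $\rho_1,\rho_2$ over $\{\rho_1^{k_1}<\rho_2^{k_2}<1\}$, one checks that $z_1^{\alpha_1}z_2^{\alpha_2}\in A^2$ exactly when $\alpha_1\ge0$ and $k_1(\alpha_2+1)+k_2(\alpha_1+1)>0$, and then
\[
\bigl\lVert z_1^{\alpha_1}z_2^{\alpha_2}\bigr\rVert^2=\frac{\pi^2k_1}{(\alpha_1+1)\bigl(k_1(\alpha_2+1)+k_2(\alpha_1+1)\bigr)}.
\]
Summing $\sum_\alpha z^\alpha\overline{w^\alpha}/\lVert z^\alpha\rVert^2$ with $t_j=z_j\overline{w_j}$ gives
\[
\pi^2k_1\,K_{\mathcal H(k_1,-k_2)}(z,w)=\sum_{\substack{\alpha_1\ge0\\ k_1(\alpha_2+1)+k_2(\alpha_1+1)>0}}(\alpha_1+1)\bigl(k_1(\alpha_2+1)+k_2(\alpha_1+1)\bigr)t_1^{\alpha_1}t_2^{\alpha_2}.
\]
On $\mathcal H(k_1,-k_2)$ one has $|t_1|^{k_1}<|t_2|^{k_2}<1$, so $x:=t_1^{k_1}t_2^{-k_2}$ satisfies $|x|<1$ and $|t_2|<1$, which legitimizes what follows.

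Next I would sum the series. For fixed $\alpha_1$, the sum over $\alpha_2$ is arithmetic--geometric; its lowest admissible $\alpha_2$ is $\lfloor-k_2(\alpha_1+1)/k_1\rfloor$, and its coefficient there is $\rho$, the representative in $\{1,\dots,k_1\}$ of $k_2(\alpha_1+1)\bmod k_1$ --- here $\gcd(k_1,k_2)=1$ is exactly what keeps $\rho\ne0$. Summing produces a factor $\bigl(\rho+(k_1-\rho)t_2\bigr)/(1-t_2)^2$ times a power of $t_2$, with the data depending on $\alpha_1$ only through $\alpha_1\bmod k_1$ and linearly. Writing $\alpha_1+1=k_1a+i$ with $i\in\{1,\dots,k_1\}$, $a\ge0$, and grouping by $i$, the sum over $a$ is again arithmetic--geometric and collapses to $\bigl(i+(k_1-i)x\bigr)/(1-x)^2$. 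Substituting $1-x=(t_2^{k_2}-t_1^{k_1})/t_2^{k_2}$ to clear denominators, I arrive at
\[
K_{\mathcal H(k_1,-k_2)}(z,w)=\frac{G(t_1,t_2)}{\pi^2k_1\,(t_2^{k_2}-t_1^{k_1})^2(1-t_2)^2},
\]
where, with $\rho(i)\in\{1,\dots,k_1\}$ the representative of $k_2i\bmod k_1$,
\[
G(t_1,t_2)=\sum_{i=1}^{k_1}t_1^{\,i-1}\,t_2^{\,\lfloor-k_2i/k_1\rfloor}\bigl(\rho(i)+(k_1-\rho(i))t_2\bigr)\bigl(i\,t_2^{2k_2}+(k_1-i)\,t_1^{k_1}t_2^{k_2}\bigr),
\]
a polynomial; reading off its monomials, and noting that the $(k_1-i)$‑term drops out for $i=k_1$, already gives the support bounds $0\le\beta_1\le2k_1-2$, $0\le\beta_2\le2k_2$.

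Finally --- and this is the main obstacle --- one must identify $G$ with $\sum_\beta D_{k_1}(\beta_1)\,D_{k_1}\bigl(k_1\beta_2+k_2\beta_1+k_1+k_2-1-2k_1k_2\bigr)t_1^{\beta_1}t_2^{\beta_2}$. My approach is to expand the target using Lemma~\ref{lemma1}, so that $D_{k_1}(r)$ counts the pairs $0\le\mu,\nu\le k_1-1$ with $\mu+\nu=r$ (and likewise for its second argument), turning the target into a sum over a four‑parameter index set; and to expand the two binomials in each summand of $G$, presenting $G$ as a sum over $i\in\{1,\dots,k_1\}$ together with the two binary choices ``$i$ versus $k_1-i$'' and ``$\rho(i)$ versus $k_1-\rho(i)$,'' where the bijection $i\mapsto\rho(i)$ of residues (again from $\gcd(k_1,k_2)=1$) links the $t_1$‑ and $t_2$‑exponents. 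Matching the two expansions monomial by monomial finishes the proof; the delicate point is the bookkeeping of the floor $\lfloor-k_2i/k_1\rfloor$ and of the boundary class $i=k_1$, where several contributions coalesce or vanish, while everything before this matching is routine. (Alternatively, a roots‑of‑unity filter that extracts from $\sum_{m,n\ge0}(m+1)(n+1)s^mu^n$ the terms with $n$ in a fixed residue class modulo $k_1$ expresses $K$ directly as a symmetric rational function of the $k_1$‑th roots of $t_2$, producing the denominator $(t_2^{k_2}-t_1^{k_1})^2(1-t_2)^2$ at once; but the numerator still has to be recognized as the $D_{k_1}$‑sum, which is the same work.)
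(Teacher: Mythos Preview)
Your route is genuinely different from the paper's, and up to the point where you obtain the rational shape of $K$ it is sound: the norm computation is correct, the arithmetic--geometric summations are legitimate on the domain, and the denominator $(t_2^{k_2}-t_1^{k_1})^2(1-t_2)^2$ falls out as you say. The paper, however, never touches the orthonormal monomial expansion. It realizes $\mathcal H(k_1,-k_2)$ as the image of $\mathbb D\times\mathbb D^*$ under the proper map $\phi(p_1,p_2)=(p_1p_2^{k_2},p_2^{k_1})$ and applies Bell's transformation formula. Summing over the $k_1$ local inverses yields an expression containing $\sum_{j=0}^{k_1-1}\bigl(\frac{1-A^{k_1}}{1-A\zeta^{jk_2}}\bigr)^2\bigl(\frac{1-B^{k_1}}{1-B\zeta^{-j}}\bigr)^2\zeta^{-j(1-k_2)}$; Lemma~\ref{lemma1} inserts the $D_{k_1}$ coefficients directly into each factor, a roots-of-unity orthogonality kills all terms except those with $k_2(\alpha_1+1)-(\alpha_2+1)\equiv 0\pmod{k_1}$, and a linear change of index $(\alpha_1,\alpha_2)\mapsto(\beta_1,\beta_2)$ finishes. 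The $D_{k_1}$ form of the numerator is thus the \emph{output} of the computation, not a target to be matched.

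That is exactly where your proposal has a gap. Your method delivers the numerator as the residue-indexed sum $G=\sum_{i=1}^{k_1}(\cdots)$, and you then owe a proof that this equals the claimed $D_{k_1}$ sum. You correctly flag this as ``the main obstacle,'' but the sketch you offer---expand both sides as four-parameter sums and match monomial by monomial via the bijection $i\mapsto\rho(i)$---is not carried out, and the bookkeeping involving the floor $\lfloor -k_2 i/k_1\rfloor$, the boundary case $i=k_1$, and the cross terms is precisely the content of the proposition. Until that identity is actually established your argument is incomplete at its hardest step. (Your parenthetical alternative, the roots-of-unity filter applied to $\sum(m+1)(n+1)s^mu^n$, is in fact the paper's method in disguise; following it through would land you on the paper's proof rather than avoiding the identification.)
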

\begin{proof}
Let $\mathbb{D}=\{z\in \mathbb{C}:|z| <1\}$ and $\mathbb{D}^*=\{z\in \mathbb{C}: 0<|z| <1\}$. The associated matrix $B$ of the generalized Hartogs triangle $\mathcal{H}(k_1,-k_2)$ and matrix $C=\adj( B)$ are given by
\begin{equation}\nonumber
B= \left[ 
\begin{array}{cc}
k_1 & -k_2 \\  \nonumber
0 & 1
\end{array}
\right], \quad C= \left[ 
\begin{array}{cc}
1 & k_2 \\  \nonumber
0 & k_1
\end{array}
\right].
\end{equation}
Define the map $\phi:\mathbb{D}\times \mathbb{D}^*\rightarrow \mathcal{H}(k_1,-k_2)$ by
\begin{eqnarray}
\phi(p_1,p_2) &=& (p_1p_2^{k_2},p_2^{k_1}).\label{equ2}
\end{eqnarray}
It is easy to see that $\phi$ is a proper holomorphic map of quotient type, see \cite{mass}.
To apply Bell's transformation theorem, with $\Omega_1=\mathbb{D}\times \mathbb{D}^*$ and $\Omega_2=\mathcal{H}(k_1,-k_2)$, we need to calculate the Jacobian determinant of the map $\phi$ defined in \eqref{equ2} which is given by
\begin{eqnarray}
\det \phi{'}(p_1,p_2)&=& \det \left( 
\begin{array}{cc}
p_2^{k_2} & k_2 p_1 p_2^{k_2 -1}\\ 
0 & k_1 p_2^{k_1 -1}
\end{array}
\right)= k_1 p_2^{k_1 +k_2 -1}. \label{eq3}
\end{eqnarray}

Now, we need to calculate the local branches of $\phi^{-1}$.By Theorem 3.12 in \cite{bender}, there are $\det{A}=k_1$ locally branches denoted by $\Phi_j$, $j=0, \cdots, k_1-1$. To find $\Phi_j(q)$, we need to solve the equation $\phi(p_1,p_2)=(q_1,q_2)$. We see that $p_1^{(j)} = q_1 q_2^{-k_2 /k_1} \zeta^{-jk_2}$ and $p_2^{(j)} = q_2^{1/k_1} \zeta^j$ for $j=0,\cdots,k_1-1$ where
\begin{equation}
\zeta =\exp\left(\frac{2 \pi i}{k_1}\right).
\end{equation}
It follows that we can take
\begin{eqnarray}\label{local}
\Phi_j(q_1,q_2)=(q_1 q_2^{-k_2 /k_1} \zeta^{-jk_2}, q_2^{1/k_1} \zeta^j), \quad j=0,\cdots,k_1 -1.
\end{eqnarray}
The next step is to calculate the Jacobian determinant of $\Phi_j$, which can be computed as follows
\begin{eqnarray} \nonumber
\det \Phi{'}(q_1,q_2)&=& \det \left( 
\begin{array}{cc}
q_2^{-k_2/k_1}\zeta^{-jk_2} & \frac{-k_2}{k_1}q_1 q_2^{-k_2/k_1 -1}\zeta^{-jk_2}\\  \nonumber
0 & \frac{1}{k_1}q_2^{1/k_1 -1}\zeta^{j}
\end{array}
\right)\\
& = & \frac{1}{k_1} q_2^{\frac{1-k_1 -k_2}{k_1}}\zeta^{j(1-k_2)}.\label{eq4}
\end{eqnarray}
Observe that the Bergman kernel of the unit disc and the Bergman kernel of punctured disc are the same and therefore
\begin{equation}\label{disk}
K_{\mathbb{D}\times \mathbb{D}^*}(p_1,p_2,q_1,q_2)=\frac{1}{\pi^2 (1-p_1 \overline{q_1})^2 (1-p_2 \overline{q_2})^2}.
\end{equation}
Apply Bell's Transformation law with the map $\phi$ defined in \eqref{equ2}, we get
\begin{equation}\label{bergnew}
\det{\phi'(p)}K_{\mathcal{H}(k_1,-k_2)}(\phi(p),q)=\mathlarger{\sum_{j=0}^{k_1-1}} K_{\mathbb{D}\times \mathbb{D}^*}(p,\Phi_j(q))\cdot \overline{\det{\Phi'(q)}}.
\end{equation}
Plugging \eqref{eq3}-\eqref{disk} in \eqref{bergnew}, we get
\begin{equation}\label{berg}
k_1 p_2^{k_1 +k_2 -1}K_{\mathcal{H}(k_1,-k_2)}(\phi(p),q)=\mathlarger{\sum_{j=0}^{k_1-1}}\frac{\frac{1}{k_1} \left(\overline{q_2}\right)^{\frac{1-k_1 -k_2}{k_1}}\zeta^{-j(1-k_2)}}{\pi^2(1-p_1 \overline{q_1}\: \left(\overline{q_2}\right)^{-k_2 /k_1} \zeta^{jk_2})^2(1-p_2\left(\overline{q_2}\right)^{1/k_1}\zeta^{-j})^2}.
\end{equation}
Solving for $K_{\mathcal{H}(k_1,-k_2)}(\phi(p),q)$ in \eqref{berg}, we get
\begin{eqnarray}\label{berg2}
K_{\mathcal{H}(k_1,-k_2)}(\phi(p),q)=\dfrac{\left(\overline{q_2}\right)^{\dfrac{1-k_1 -k_2}{k_1}}}{\pi^2 k_1^2 p_2^{k_1 +k_2 -1}} \sum_{j=0}^{k_1-1}\frac{\zeta^{-j(1-k_2)}}{(1-A\zeta^{jk_2})^2(1-B\zeta^{-j})^2},
\end{eqnarray}
where $A=p_1 \overline{q_1}\: \left(\overline{q_2}\right)^{-k_2 /k_1}$ and $B=p_2\left(\overline{q_2}\right)^{1/k_1}$. Our goal now is to evaluate the sum in \eqref{berg2}. To do this, observe that

\begin{equation}\nonumber
\prod_{j=0}^{k_1-1}(1-A\zeta^{jk_2})^2(1-B\zeta^{-j})^2=(1-A^{k_1})^2(1-B^{k_1})^2.
\end{equation}
So, we can rewrite \eqref{berg2} as follows
\begin{eqnarray}\label{berg3}
K_{\mathcal{H}(k_1,-k_2)}(\phi(p),q)=\dfrac{\left(\overline{q_2}\right)^{\frac{1-k_1 -k_2}{k_1}}}{\pi^2 k_1^2 p_2^{k_1 +k_2 -1}} \frac{h(A,B)}{(1-A^{k_1})^2(1-B^{k_1})^2},
\end{eqnarray}
where
\begin{eqnarray}\label{hfun}
h(A,B)=\sum_{j=0}^{k_1-1}\left(\dfrac{1-A^{k_1}}{1-A\zeta^{jk_2}}\right)^2 \left(\dfrac{1-B^{k_1}}{1-B \zeta^{-j}}\right)^2 \zeta^{-j(1-k_2)}.
\end{eqnarray}
We claim that $h$ satisfies the following symmetry property
\begin{equation}\label{sym}
h(\zeta^{k_2}A,\zeta^{-1}B)=\zeta^{1-k_2}h(A,B).
\end{equation}
To see this, note that
\begin{eqnarray}\nonumber
h(\zeta^{k_2}A,\zeta^{-1}B) &=& \mathlarger{\sum_{j=0}^{k_1-1}}\left(\dfrac{1-(\zeta^{k_2}A)^{k_1}}{1-(\zeta^{k_2}A)\zeta^{jk_2}}\right)^2 \left(\dfrac{1-(\zeta^{-1}B)^{k_1}}{1-(\zeta^{-1}B) \zeta^{-j}}\right)^2 \zeta^{-j(1-k_2)}\\ \nonumber
&=& \mathlarger{\sum_{j=0}^{k_1-1}}\left(\dfrac{1-A^{k_1}}{1-\zeta^{k_2(1+j)}A}\right)^2 \left(\frac{1-B^{k_1}}{1-\zeta^{-(1+j)}B)}\right)^2 \zeta^{-j(1-k_2)} \\ \label{symmetry}
&=& \mathlarger{ \sum_{\ell=1}^{k_1}}\left(\dfrac{1-A^{k_1}}{1-\zeta^{k_2\ell}A}\right)^2 \left(\dfrac{1-B^{k_1}}{1-\zeta^{-\ell}B)}\right)^2 \zeta^{-(\ell-1)(1-k_2)} \\ \nonumber
&=& \zeta^{(1-k_2)}h(A,B).
\end{eqnarray}
Equation \eqref{symmetry} holds by letting $\ell=1+j$ and the last equality holds by rearranging the terms and noting that the general term is the same when $\ell=k_1$ and when $\ell=0$, and this completes the proof of the symmetry of $h$ in \eqref{sym}.\\
Now, note that $\dfrac{1-A^{k_1}}{1-\zeta^{k_2\ell}A}$ is a polynomial of degree $k_1-1$ in $A$, and so $\left(\dfrac{1-A^{k_1}}{1-\zeta^{k_2 \ell}A}\right)^2$ is a polynomial of degree $2k_1-2$ in $A$. It follows that
\begin{equation}\nonumber
h(A,B)=\sum_{\alpha_1=0}^{2k_1-2}\sum_{\alpha_2=0}^{2k_1-2}h_{\alpha_1,\alpha_2}A^{\alpha_1}B^{\alpha_2}.
\end{equation}
Here, $h_{\alpha_1,\alpha_2}$ depends on $\zeta,k_2$ and $j$. By the symmetry of $h$ as in \eqref{sym}, we have
\begin{equation}
\sum_{\alpha_1=0}^{2k_1-2}\sum_{\alpha_2=0}^{2k_1-2}h_{\alpha_1,\alpha_2}(\zeta^{k_2}A)^{\alpha_1}(\zeta^{-1}B)^{\alpha_2} =\zeta^{1-k_2} \sum_{\alpha_1=0}^{2k_1-2}\sum_{\alpha_2=0}^{2k_1-2}h_{\alpha_1,\alpha_2}A^{\alpha_1}B^{\alpha_2}, \nonumber
\end{equation}
it follows that
\begin{equation}\nonumber
\sum_{\alpha_1=0}^{2k_1-2}\sum_{\alpha_2=0}^{2k_1-2}h_{\alpha_1,\alpha_2}(\zeta^{k_2\alpha_1-\alpha_2}-\zeta^{1-k_2})A^{\alpha_1}B^{\alpha_2}=0,
\end{equation}
so, if $h_{\alpha_1,\alpha_2}\ne 0$, we should have
\begin{equation}\nonumber
\zeta^{k_2\alpha_1-\alpha_2+k_2-1}=1.
\end{equation}
Thus, $k_1\vert (k_2\alpha_1-\alpha_2+k_2-1)$, since $\zeta=e^{2\pi i /{k_1}}$ i.e. there is $\nu \in \mathbb{Z}$ such that
\begin{equation}\label{cond}
k_2\alpha_1-\alpha_2+k_2-1=\nu k_1.
\end{equation}
Substituting $x$ by $\zeta^{jk_2}A$ in \eqref{event} in Lemma \ref{lemma1} we get
\begin{equation}\label{eq5}
\left(\dfrac{1-A^{k_1}}{1-\zeta^{jk_2}A} \right)^2 = \mathlarger{\sum_{\alpha_1=0}^{2k_1-2}}D_{k_1}(\alpha_1)A^{\alpha_1}\zeta^{jk_2\alpha_1},
\end{equation}
Similarly, we can write
\begin{equation}\label{eq6}
\left(\dfrac{1-B^{k_1}}{1-\zeta^{-j}B} \right)^2 = \mathlarger{\sum_{\alpha_2=0}^{2k_1-2}}D_{k_1}(\alpha_2)B^{\alpha_2}\zeta^{-j\alpha_2}.
\end{equation}
Plugging \eqref{eq5} and \eqref{eq6} in the formula \eqref{hfun} for $h(A,B)$, we get
\begin{eqnarray}\nonumber
h(A,B)&=&\mathlarger{\sum\limits_{j=0}^{k_1-1}}\left(\mathlarger{ \sum\limits_{\alpha_1=0}^{2k_1-2}}D_{k_1}(\alpha_1)A^{\alpha_1}\zeta^{jk_2\alpha_1} \right) \left(\mathlarger{\sum\limits_{\alpha_2=0}^{2k_1-2}}D_{k_1}(\alpha_2)B^{\alpha_2}\zeta^{-j\alpha_2} \right)\zeta^{-j(1-k_2)}\\ \nonumber
&=& \mathlarger{\sum\limits_{j=0}^{k_1-1}} \: \mathlarger{\sum\limits_{\alpha_1=0}^{2k_1-2}}\: \mathlarger{\sum\limits_{\alpha_2=0}^{2k_1-2}}D_{k_1}(\alpha_1)D_{k_1}(\alpha_2)A^{\alpha_1}B^{\alpha_2}\zeta^{j(k_2\alpha_1-\alpha_2-1+k_2)}\\ \nonumber
&=& \mathlarger{\sum\limits_{j=0}^{k_1-1}} \mathlarger{\sum\limits_{\nu \in \mathbb{Z}}}\: \mathlarger{ \sum\limits_{\alpha_1,\alpha_2=0}^{2k_1-2}}D_{k_1}(\alpha_1)D_{k_1}(\alpha_2)A^{\alpha_1}B^{\alpha_2}\zeta^{j k_1 \nu}, \quad \text{since} \: k_2\alpha_1-\alpha_2-1+k_2 =\nu k_1, \nu \in \mathbb{Z} \\ \label{hfun25}
&=& \mathlarger{\sum\limits_{j=0}^{k_1-1}} \mathlarger{ \sum\limits_{\substack{\alpha_1,\alpha_2=0\\ \nu \in \mathbb{Z}}}^{2k_1-2}} \mathlarger{\sum\limits_{\substack{k_2(\alpha_1+1)\\-(\alpha_2+1)=\nu k_1}}}D_{k_1}(\alpha_1)D_{k_1}(\alpha_2)A^{\alpha_1}B^{\alpha_2}, \quad \quad \quad \text{since}\: \zeta=e^{2\pi i/k_1},\: \text{so} \: \zeta^{jk_1 \nu}=1 \\ 
&=& k_1 \mathlarger{\sum\limits_{\substack{\alpha_1,\alpha_2=0\\ \nu \in \mathbb{Z}}}^{2k_1-2}}\mathlarger{\sum\limits_{\substack{k_2(\alpha_1+1)\\-(\alpha_2+1)=\nu k_1}}}D_{k_1}(\alpha_1)D_{k_1}(\alpha_2)A^{\alpha_1}B^{\alpha_2}. \label{hfun2}
\end{eqnarray}
We get $k_1$ in \eqref{hfun2} since the summand in \eqref{hfun25} is free of $j$. Then we plug \eqref{hfun2} in \eqref{berg3} and get
\begin{eqnarray}\label{berg4}
K_{\mathcal{H}(k_1,-k_2)}(\phi(p),q)=\dfrac{\left(\overline{q_2}\right)^{\frac{1-k_1 -k_2}{k_1}}}{\pi^2 k_1 p_2^{k_1 +k_2 -1}}\cdot
\dfrac{\mathlarger{\sum\limits_{\substack{\alpha_1,\alpha_2=0 \\ \nu \in \mathbb{Z}}}^{2k_1-2}}\mathlarger{\sum\limits_{\substack{k_2(\alpha_1+1)\\-(\alpha_2+1)=\nu k_1}}}D_{k_1}(\alpha_1)D_{k_1}(\alpha_2)A^{\alpha_1}B^{\alpha_2}}{(1-A^{k_1})^2(1-B^{k_1})^2}.
\end{eqnarray}
Using the values of $A=p_1 \overline{q_1}\: \left(\overline{q_2}\right)^{-k_2 /k_1}$ and $B=p_2\left(\overline{q_2}\right)^{1/k_1}$ in \eqref{berg4}, we get

\begin{eqnarray}\label{berg5}\nonumber
K_{\mathcal{H}(k_1,-k_2)}(\phi(p),q)&=& \dfrac{\left(\overline{q_2}\right)^{\frac{1-k_1 -k_2}{k_1}}\mathlarger{\sum\limits_{\substack{\alpha_1,\alpha_2=0\\ \nu \in \mathbb{Z}}}^{2k_1-2}}\mathlarger{\sum\limits_{\substack{k_2(\alpha_1+1)\\-(\alpha_2+1)=\nu k_1}}}D_{k_1}(\alpha_1)D_{k_1}(\alpha_2)\left( p_1 \overline{q_1}\left(\overline{q_2}\right)^{-k_2 /k_1}\right)^{\alpha_1} \left(p_2\left(\overline{q_2}\right)^{1/{k_1}}\right)^{\alpha_2}}{\pi^2 k_1 p_2^{k_1 +k_2 -1}\left(1-\left( p_1 \overline{q_1}\left(\overline{q_2}\right)^{-k_2 /k_1}\right)^{k_1}\right)^2\left(1-\left( p_2 \left(\overline{q_2}\right)^{1/k_1}\right)^{k_1}\right)^2}\\ \nonumber
&=&\dfrac{\mathlarger{\sum\limits_{\substack{\alpha_1,\alpha_2=0\\ \nu \in \mathbb{Z}}}^{2k_1-2}}\mathlarger{\sum\limits_{\substack{k_2(\alpha_1+1)\\-(\alpha_2+1)=\nu k_1}}}D_{k_1}(\alpha_1)D_{k_1}(\alpha_2)\left( p_1^{k_1} \left(\overline{q_1}\right)^{k_1}\left(\overline{q_2}\right)^{-k_2}\right)^{\alpha_1 / k_1} \left(p_2^{k_1}\overline{q_2}\right)^{\alpha_2/{k_1}}
 }{\pi^2 k_1 \left(p_2^{k_1}\overline{q_2}\right)^{(k_1+k_2-1)/{k_1}}\left(1-p_1^{k_1}\left(\overline{q_1}\right)^{k_1}\left(\overline{q_2}\right)^{-k_2}\right)^2\left(1-p_2^{k_1}\overline{q_2}\right)^2}\\ \label{berg55}
&=& K_{\mathcal{H}(k_1,-k_2)}(p_1p_2^{k_2},p_2^{k_1},q_1,q_2).
\end{eqnarray}
Let $z_1=p_1 p_2^{k_2}$, $z_2=p_2^{k_1}$, $w_1=q_1$ and $w_2=q_2$ in \eqref{berg55}, we get
\begin{eqnarray}\label{berg66}
K_{\mathcal{H}(k_1,-k_2)}(z,w)=\dfrac{\mathlarger{\sum\limits_{\substack{\alpha_1,\alpha_2=0\\ \nu \in \mathbb{Z}}}^{2k_1-2}}\mathlarger{\sum\limits_{\substack{k_2(\alpha_1+1)\\-(\alpha_2+1)=\nu k_1}}} D_{k_1}(\alpha_1)D_{k_1}(\alpha_2)\left(\frac{z_1^{k_1}\overline{w_1}^{k_1}}{(z_2\overline{w_2})^{k_2}} \right)^{\alpha_1 / k_1} \left(z_2\overline{w_2}\right)^{\alpha_2/{k_1}}
 }{\pi^2 k_1 \left(z_2\overline{w_2}\right)^{(k_1+k_2-1)/{k_1}}\left(\frac{(z_2\overline{w_2})^{k_2}-z_1^{k_1}(\overline{w_1})^{k_1}}{(z_2\overline{w_2})^{k_2}}  \right)^2\left(1-z_2\overline{w_2}\right)^2}.
\end{eqnarray}
Let $t_1=z_1\overline{w_1}$ and $t_2=z_2\overline{w_2}$ in \eqref{berg66}, this implies
\begin{eqnarray}\nonumber
K_{\mathcal{H}(k_1,-k_2)}(z,w)&=&\dfrac{\mathlarger{\sum\limits_{\substack{\alpha_1,\alpha_2=0\\ \nu \in \mathbb{Z}}}^{2k_1-2}}\mathlarger{\sum\limits_{\substack{k_2(\alpha_1+1)\\-(\alpha_2+1)=\nu k_1}}} D_{k_1}(\alpha_1)D_{k_1}(\alpha_2)\dfrac{(t_1^{k_1})^{\alpha_1 / k_1}}{(t_2^{k_2})^{\alpha_1 / k_1}}  t_2^{\alpha_2/{k_1}}
 }{\pi^2 k_1 t_2^{(k_1+k_2-1)/{k_1}}\frac{\left(t_2^{k_2}-t_1^{k_1}\right)^2}{t_2^{2k_2}}  \left(1-t_2\right)^2}\\ 
&=&\dfrac{\mathlarger{\sum\limits_{\substack{\alpha_1,\alpha_2=0\\ \nu \in \mathbb{Z}}}^{2k_1-2}}\mathlarger{\sum\limits_{\substack{k_2(\alpha_1+1)\\-(\alpha_2+1)=\nu k_1}}}D_{k_1}(\alpha_1)D_{k_1}(\alpha_2)t_1^{\alpha_1} t_2^{\frac{\alpha_2 -\alpha_1 k_2 -k_1-k_2 +1 +2k_1 k_2}{k_1}}}
{\pi^2 k_1 \left(t_2^{k_2}-t_1^{k_1}\right)^2  \left(1-t_2\right)^2}. \label{shit0}
\end{eqnarray}
The numerator in \eqref{shit0} can be written as
\begin{eqnarray}\nonumber
&=&\mathlarger{\sum\limits_{\substack{\alpha_1,\alpha_2=0\\ \nu \in \mathbb{Z}}}^{2k_1-2}}\mathlarger{\sum\limits_{\substack{k_2(\alpha_1+1)\\-(\alpha_2+1)=\nu k_1}}} D_{k_1}(\alpha_1)D_{k_1}(\alpha_2)t_1^{\alpha_1} t_2^{\frac{(\alpha_2+1) -k_2(\alpha_1 +1) -k_1(1-2k_2)}{k_1}}\\ \nonumber
&=&\mathlarger{\sum\limits_{\substack{\alpha_1,\alpha_2=0\\ \nu \in \mathbb{Z}}}^{2k_1-2}}\mathlarger{\sum\limits_{\substack{k_2(\alpha_1+1)\\-(\alpha_2+1)=\nu k_1}}} D_{k_1}(\alpha_1)D_{k_1}(\alpha_2)t_1^{\alpha_1} t_2^{\frac{-k_1 \nu -k_1(1-2k_2)}{k_1}} 
\end{eqnarray}

\begin{eqnarray}
=\mathlarger{\sum\limits_{\substack{\alpha_1,\alpha_2=0\\ \nu \in \mathbb{Z}}}^{2k_1-2}}\mathlarger{\sum\limits_{\substack{k_2(\alpha_1+1)\\-(\alpha_2+1)=\nu k_1}}} D_{k_1}(\alpha_1)D_{k_1}(\alpha_2)t_1^{\alpha_1} t_2^{-\nu -(1-2k_2)}.\label{shit}
\end{eqnarray}

Plugging \eqref{shit} in \eqref{shit0} we get
\begin{eqnarray}
K_{\mathcal{H}(k_1,-k_2)}(z,w)&=&\dfrac{\mathlarger{\sum\limits_{\substack{\alpha_1,\alpha_2=0\\ \nu \in \mathbb{Z}}}^{2k_1-2}}\mathlarger{\sum\limits_{\substack{k_2(\alpha_1+1)\\-(\alpha_2+1)=\nu k_1}}} D_{k_1}(\alpha_1)D_{k_1}(\alpha_2)t_1^{\alpha_1} t_2^{-\nu -(1-2k_2)}}{\pi^2 k_1 \left(t_2^{k_2}-t_1^{k_1}\right)^2  \left(1-t_2\right)^2}. \label{berg99}
\end{eqnarray}

Let $\beta_2 = \dfrac{1}{k_1}(\alpha_2 -\alpha_1 k_2 -k_1-k_2 +1 +2k_1 k_2)$. We know from \eqref{cond} that
\begin{equation}\nonumber
k_2\alpha_1-\alpha_2+k_2-1=\nu k_1.
\end{equation}

So $\beta_2=-\nu -(1-2k_2)$ is an integer. Let $\beta_1 = \alpha_1$, so from \eqref{berg99} we have
\begin{eqnarray}\label{berg100}
K_{\mathcal{H}(k_1,-k_2)}(z,w)&=&\dfrac{1}{\pi^2 k_1}\cdot \dfrac{\mathlarger{\sum\limits_{\beta \in \mathbb{N}^2}}D_{k_1}(\beta_1)D_{k_1}(k_1\beta_2 + \beta_1 k_2 +k_1 +k_2 -1 -2k_1 k_2)t_1^{\beta_1} t_2^{\beta_2}}
{ \left(t_2^{k_2}-t_1^{k_1}\right)^2  \left(1-t_2\right)^2}.
\end{eqnarray}
Since $0 \leq \alpha_1, \alpha_2 \leq 2k_1 -2$, it follows from the definition of $\beta_1$ and $\beta_2$ that
\begin{eqnarray}\label{nbeta1}
0 & \leq & \beta_1 \leq 2k_1 -2, \\ \label{nbeta2}
0 & \leq & k_2 \beta_1 + k_1 \beta_2 + k_1 + k_2  - 1 - 2k_1 k_2 \leq 2k_1 -2.
\end{eqnarray}
So the sum in \eqref{berg100} is finite. This completes the proof of Proposition \eqref{prop2}.
\end{proof}
\section{Proof of Theorem \ref{mainthm1} and Theorem \ref{mainthm2}}
Now, we are ready to prove Theorem \ref{mainthm1} and Theorem \ref{mainthm2} and calculate the Bergman Kernel of the domain in \eqref{defdef}
\begin{equation}\nonumber
\mathcal{U}= \left\{ (z_1,z_2) \in \mathbb{C}^2: |z_1|^{b_1^1} |z_2|^{b_2^1}<1,\: |z_1|^{b_1^2} |z_2|^{b_2^2}<1 \right\}.
\end{equation} 
We will use the following result.
\begin{thm}[\cite{bender},Theorem 3.12]\label{benthm}
Suppose that the monomial polyhedron $\mathcal{U}$, the domain defined in Definition \ref{dom1}, is bounded, and assume that $B$ is the associated matrix of $\mathcal{U}$, and let $C=\adj{(B)}$. Then the map $\phi_C(z)=z^C$ maps $\mathbb{D}_{L(B)}^n$ onto $\mathcal{U}$, and the map so defined
\begin{equation}\nonumber
\phi_C: \mathbb{D}_{L(B)}^n \rightarrow \mathcal{U}
\end{equation}
is a proper holomorphic map of quotient type with group $\Gamma$ consisting of the automorphisms
\begin{equation}\nonumber
\sigma_v: \mathbb{D}_{L(B)}^n \rightarrow \mathbb{D}_{L(B)}^n
\end{equation}
given by
\begin{equation}\nonumber
\sigma_v(z)= \exp \left(2 \pi i C^{-1} \right) \odot v
\end{equation}
for $v \in \mathbb{Z}^n$. Further, the group $\Gamma$ has exactly $\det(C)$ elements.
\end{thm}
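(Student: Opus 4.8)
The plan is to split the statement into a piece of toric linear algebra, which yields surjectivity, properness, and the choice $C=\adj(B)$, and a piece of covering-space theory for monomial maps of the torus, which produces the deck group $\Gamma$ and its order. Write the components of the monomial map as $\phi_C(z)_i=\prod_{k}z_k^{c_k^i}$, where $c_k^i$ is the entry of $C$ in row $i$ and column $k$; this map is holomorphic on the torus $(\mathbb{C}^\ast)^n$ and extends holomorphically across $\{z_k=0\}$ exactly when the $k$-th column of $C$ is entrywise nonnegative.

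First I would introduce log-modulus coordinates $u=(\log|z_1|,\dots,\log|z_n|)$, so that taking moduli in $w=\phi_C(z)$ gives the linear relation $\log|w|=Cu$ between shadows. In these coordinates the defining inequalities \eqref{dom1} of $\mathcal{U}$ read $Bv<0$ (all components negative), while the unit polydisc is $\{u<0\}$. Because $C=\adj(B)$ satisfies $BC=\det(B)\,I$ and we have normalized $\det(B)>0$, we get $B(Cu)=\det(B)\,u$, so $\{u:\,B(Cu)<0\}=\{u<0\}$. Thus the invertible integer matrix $C$ carries the open shadow of the polydisc bijectively onto the open shadow of $\mathcal{U}$; this both motivates the choice $C=\adj(B)$ and shows that $\phi_C$ maps the torus part of the source onto $\mathcal{U}\cap(\mathbb{C}^\ast)^n$.

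Next, on the torus the map $\phi_C\colon(\mathbb{C}^\ast)^n\to(\mathbb{C}^\ast)^n$ is a surjective group homomorphism with finite kernel $K=\{z:\,z^C=\mathbf 1\}$, and by the standard structure theory of such maps (read off from a Smith normal form $C=UDV$ with $U,V\in \mathrm{GL}_n(\mathbb{Z})$ and $D$ diagonal, the $2\times2$ Hermite reduction of Proposition \ref{prop1} being the low-dimensional model) it is an unbranched covering of degree $|\det C|=\det C$, with $K\cong\mathbb{Z}^n/C\mathbb{Z}^n$. I would then verify that each $\sigma_v$ lies in $K$: using $\sigma_v(z)=\exp(2\pi i\,C^{-1}v)\odot z$ and the identity $(\lambda\odot z)^C=\lambda^C\odot z^C$, the multiplier satisfies $(\exp(2\pi i\,C^{-1}v))^C=\exp(2\pi i\,CC^{-1}v)=\exp(2\pi i\,v)=\mathbf 1$ for $v\in\mathbb{Z}^n$, so $\phi_C\circ\sigma_v=\phi_C$. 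Since $\sigma_v=\sigma_{v'}$ precisely when $v-v'\in C\mathbb{Z}^n$, the distinct $\sigma_v$ are parametrized by $\mathbb{Z}^n/C\mathbb{Z}^n$; hence $\Gamma=\{\sigma_v\}\subseteq K$ and $|\Gamma|=|K|=\det C$, forcing $\Gamma=K$. Consequently the fibers of $\phi_C$ are exactly the $\Gamma$-orbits, and because each $\sigma_v$ only rescales coordinates by unimodular factors, $\Gamma$ preserves every disc or punctured-disc factor; this gives the quotient-type structure with group $\Gamma$ of order $\det C$.

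The main obstacle is the boundary analysis along the coordinate hyperplanes, which is exactly where the punctured structure $\mathbb{D}^n_{L(B)}$ is forced and where properness must be checked. I would prove properness by showing $\phi_C$ sends sequences leaving the source to sequences leaving $\mathcal{U}$: the linear bijection on shadows handles approach to the finite faces, so the delicate directions are $u_k\to-\infty$. Setting $z_k=0$ drives every image coordinate $w_i$ with $c_k^i>0$ to $0$, and the point of $L(B)$ is to puncture the $k$-th factor precisely when this limit leaves $\mathcal{U}$, while leaving the $k$-th factor a full disc when the limit lands on a legitimate boundary stratum of $\mathcal{U}$ contained in some $\{w_i=0\}$ (as happens, e.g., for the unpunctured first factor of the Hartogs triangle). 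Showing that this sign-based prescription on the columns of $C=\adj(B)$ simultaneously makes $\phi_C$ holomorphic on $\mathbb{D}^n_{L(B)}$, keeps its image inside $\mathcal{U}$, preserves properness, and maintains the generic fiber count $\det C$ across the branch locus is the technical heart of the argument, and requires a careful column-by-column matching of the signs of $C$ against the defining inequalities of $\mathcal{U}$.
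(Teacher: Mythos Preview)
The paper does not prove this theorem at all: it is quoted verbatim from \cite{bender} (Theorem~3.12 there) and invoked as a black box in the proof of Theorems~\ref{mainthm1} and~\ref{mainthm2}. There is therefore no in-paper proof to compare your proposal against.

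That said, your sketch is a reasonable outline of how the cited result is established. A few remarks. First, the statement as printed contains what appears to be a typo in the formula for $\sigma_v$; you have silently (and correctly) read it as $\sigma_v(z)=\exp(2\pi i\,C^{-1}v)\odot z$, which is the only interpretation under which the computation $\phi_C\circ\sigma_v=\phi_C$ goes through. Second, the symbol $\mathbb{D}^n_{L(B)}$ is never defined in this paper, so your boundary discussion is necessarily speculative; in \cite{bender} the set $L(B)\subset\{1,\dots,n\}$ records which coordinate hyperplanes meet $\mathcal{U}$, and the $k$-th factor is punctured exactly when $k\notin L(B)$. Your description of this in terms of the sign pattern of the columns of $C$ is morally right but would need to be matched to that definition to be made precise. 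Third, the claim that $\phi_C$ remains proper and of constant sheet number $\det C$ across the branch locus is indeed the technical crux, and your outline correctly flags it as the place where real work is required rather than attempting to wave it away.
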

Recall that
\begin{eqnarray} \nonumber
B &=& \left[ 
\begin{array}{cc}
b_1^1 & b_2^1\\  \nonumber
b_1^2 & b_2^2
\end{array}
\right] \label{matB}
\end{eqnarray}
is the associated matrix for the domain $\mathcal{U}$.
It is a well-known fact in Algebra that $B^{-1}=\frac{1}{\det{B}}\cdot \adj(B)$, where $\adj(B)=\left[ \begin{array}{cc}
b_2^2 & -b_2^1\\  \nonumber
-b_1^2 & b_1^1
\end{array}
\right]=C$ represents the \emph{adjugate matrix} of $B$. The greatest common factor of the entries for each column in $C$ is not necessarily equal to $1$. So, we are going to divide each entry in $C$ by the greatest common divisor of its column's entries. This introduces the matrix $A=\left[ \begin{array}{cc}
a_1^1 & a_2^1\\  \nonumber
a_1^2 & a_2^2
\end{array}
\right]$ as in \eqref{matAAA}. In particular,
\begin{eqnarray}\nonumber
A &=& \left[ \begin{array}{cc}
a_1^1 & a_2^1\\  
a_1^2 & a_2^2
\end{array}
\right] = \left[ \begin{array}{cc}
\dfrac{b_2^2}{\gcd(b_2^2,b_1^2)} & \dfrac{-b_2^1}{\gcd(b_2^1,b_1^1)}\\ 
\dfrac{-b_1^2}{\gcd(b_2^2,b_1^2)} & \dfrac{b_1^1}{\gcd(b_2^1,b_1^1)}
\end{array}
\right].
\end{eqnarray}
We can write $C$ as 
\begin{eqnarray}\nonumber
C = AG, \quad \text{where}\quad G=\left[ \begin{array}{cc}
\gcd(b_2^2,b_1^2) & 0\\  
0 & \gcd(b_2^1,b_1^1)
\end{array}
\right].
\end{eqnarray}
Observe that
\begin{equation}\nonumber
\det(A) = \dfrac{b_2^2 b_1^1}{\gcd(b_2^2, b_1^2) \gcd(b_2^1, b_1^1)}-\dfrac{b_2^1 b_1^2}{\gcd(b_2^2, b_1^2) \gcd(b_2^1, b_1^1)}=\dfrac{\det(B)}{\gcd(b_2^2, b_1^2) \gcd(b_2^1, b_1^1)}.
\end{equation}
So, $\det(A)>0$ since $\det(B)>0$ by hypothesis.

Applying Proposition \ref{prop1} to the matrix $A$, and letting $\ell_1$, $\ell_2$ and $h$ be as in Proposition \ref{prop1}, we have $A=L^{-1}H$, where

\begin{equation}
L=\left[ \begin{array}{cc}
\ell_1 & \ell_2\\  \nonumber
-a_1^2 & a_1^1
\end{array}
\right] \quad \text{and} \quad H=\left[ \begin{array}{cc}
1 & h\\  \nonumber
0 & \det(A)
\end{array}
\right],
\end{equation}
with $h=\ell_1 a_2^1+\ell_2a_2^2$ and $\det(L)=\ell_1 a_1^1 + \ell_2 a_1^2 =1$, and $0 \leq h<\det (A)$.
Notice that
\begin{eqnarray} \label{decomp} 
\displaystyle \phi_A(z) &=& z^A = z^{L^{-1}H} = \left(z^H\right)^{L^{-1}} = \phi_{L^{-1}}(z^H) = \phi_{L^{-1}} (\phi_H(z)) = \left( \phi_{L^{-1}} \circ \phi_H \right) (z). 
\end{eqnarray}
The map $\phi_C:\mathbb{D}\times \mathbb{D}^* \rightarrow \mathcal{U}$ is a proper holomorphic map of quotient type, see Theorem \ref{benthm}, and $C=AG$, so
\begin{eqnarray}\nonumber
\phi_C = \phi_{AG} = \phi_A \circ \phi_G, \quad \text{by}\; \eqref{decomp},
\end{eqnarray}
but $\phi_G(z_1,z_2)= (z_1^{\gcd(b_2^2,b_1^2)},z_2^{\gcd(b_2^1,b_1^1)})\in \mathbb{D}\times \mathbb{D}^*$, it follows that $\phi_G(\mathbb{D}\times \mathbb{D}^*) = \mathbb{D}\times \mathbb{D}^*$. Further, $\phi_G$ is a proper holomorphic map of quotient type. We claim that $\phi_A: \mathbb{D}\times \mathbb{D}^*  \rightarrow \mathcal{U}$ is a proper holomorphic map. It suffices to show that for each compact $K \subset \mathcal{U}$, $\phi_A^{-1}(K) \subset \mathbb{D}\times \mathbb{D}^*$ is compact. Suppose there was a set $K$ for which $\phi_A^{-1}(K) \subset \mathbb{D}\times \mathbb{D}^*$ is noncompact. Then from the representation $\phi_G(z_1,z_2)=(z_1^{\gcd(b_2^2,b_1^2)},z_2^{\gcd(b_2^1,b_1^1)})$, it follows that $\phi_G^{-1}(\phi_A^{-1}(K)) \subset \mathbb{D}\times \mathbb{D}^*$ is also noncompact, that is $\phi_C^{-1}(K)$ is not compact which contradicts the properness of $\phi_C$.
By \eqref{decomp} the proper holomorphic map $\phi_A:\mathbb{D}\times \mathbb{D}^{*} \rightarrow \mathcal{U}$ factors into the proper holomorphic map $\phi_H$ and the biholomorphism $\phi_{L^{-1}}$. The diagram below shows the three maps.

\begin{equation}\nonumber
\mathbb{D}\times \mathbb{D}^{*} \xrightarrow[\text{proper holo.}]{\quad \quad \quad \phi_H \quad \quad \quad } \phi_H(\mathbb{D}\times \mathbb{D}^{*}) \xrightarrow[\text{biholo.}]{\quad \quad \quad \phi_{L^{-1}}\quad} \mathcal{U}
\end{equation}
where $\phi_H(\mathbb{D}\times \mathbb{D}^{*})$ is a generalized Hartogs triangle with $H=A$ using Proposition \ref{prop2}.

The Bergman kernel of $\phi_H(\mathbb{D}\times \mathbb{D}^{*})$, since $\gcd(\det(A),h)=1$ and $0\leq h < \det(A)$ we apply Proposition \ref{prop2} to $\phi_H$ with
\begin{equation}\nonumber
k_1=\det(A) \quad \text{and} \quad k_2=h.
\end{equation}
We get
\begin{eqnarray}\label{bergV}
K_{\phi(\mathbb{D}\times \mathbb{D}^{*})}(z,w)=\dfrac{\mathlarger{\sum\limits_{\beta \in \mathbb{N}^2}}D_{k_1}(\beta_1)D_{k_1}(k_2 \beta_1 + k_1 \beta_2 +k_1+k_2-2k_1 k_2 -1)u_1^{\beta_1} u_2^{\beta_2}}
{\pi^2 k_1 \left(u_2^{k_2}-u_1^{k_1}\right)^2  \left(1-u_2\right)^2},
\end{eqnarray}
where $u_j=z_j \overline{w_j}$, $j=1,2$ and the sum is finite and $\beta_1$ and $\beta_2$ satisfy \eqref{nbeta1} and \eqref{nbeta2}, that is
\begin{eqnarray}\label{nbeta111}
0 & \leq & \beta_1 \leq 2k_1 -2, \\ \label{nbeta222}
0 & \leq & k_2 \beta_1 + k_1 \beta_2 + k_1 + k_2  - 1 - 2k_1 k_2 \leq 2k_1 -2.
\end{eqnarray}

To calculate the Bergman kernel of $\mathcal{U}$, we apply transformation formula on the biholomorphism $\phi_L$ to get
\begin{equation}\label{bergUa}
K_{\mathcal{U}}(p,q)=\det \phi_L'(p) K_V(\phi_L(p),\phi_L(q)) \overline{\det \phi_L'(q)}.
\end{equation}

Observe that $\phi_L(p)=(p_1^{\ell_1}p_2^{\ell_2},p_1^{-a_1^2}p_2^{a_1^1})$, $\phi_L(q)=(q_1^{\ell_1}q_2^{\ell_2},q_1^{-a_1^2}q_2^{a_1^1})$, so
\begin{eqnarray}\label{moh}
\det \phi_L'(p)= p_1^{\ell_1-a_1^2-1}p_2^{\ell_2 +a_1^1 -1}, \quad \det \phi_L'(q)= q_1^{\ell_1-a_1^2-1}q_2^{\ell_2 +a_1^1 -1}.
\end{eqnarray}
Plugging \eqref{moh} in \eqref{bergUa} we get

\begin{equation}\label{bergU}
K_{\mathcal{U}}(p,q)= (p_1\overline{q_1})^{\ell_1-a_1^2-1} (p_2\overline{q_2})^{\ell_2+a_1^1-1} K_V(\phi_L(p),\phi_L(q)).
\end{equation}

Letting $z=(p_1^{\ell_1}p_2^{\ell_2},p_1^{-a_1^2}p_2^{a_1^1})$ and $w=(q_1^{\ell_1}q_2^{\ell_2},q_1^{-a_1^2}q_2^{a_1^1})$ in \eqref{bergV} we get $K_V(p_1^{\ell_1}p_2^{\ell_2},p_1^{-a_1^2}p_2^{a_1^1},q_1^{\ell_1}q_2^{\ell_2},q_1^{-a_1^2}q_2^{a_1^1})=$
\begin{eqnarray}\label{bergV1}
\dfrac{\mathlarger{\sum\limits_{\beta \in \mathbb{N}^2}}D_{k_1}(\beta_1)D_{k_1}(k_2 \beta_1 + k_1 \beta_2 +k_1+k_2-2k_1 k_2 -1)s_1^{\beta_1} s_2^{\beta_2}}
{\pi^2 k_1 \left(s_2^{k_2}-s_1^{k_1}\right)^2  \left(1-s_2\right)^2},
\end{eqnarray}
where
\begin{eqnarray}\nonumber
s_1 = (p_1\overline{q_1})^{\ell_1}(p_2\overline{q_2})^{\ell_2}, \quad s_2 = (p_1\overline{q_1})^{-a_1^2}(p_2\overline{q_2})^{a_1^1},
\end{eqnarray}
and $0 \leq \beta_1 \leq 2k_1 -2$ and $0 \leq k_2 \beta_1 + k_1 \beta_2 +k_1+k_2-2k_1 k_2 -1 \leq 2k_1-2$.
Plugging \eqref{bergV1} in \eqref{bergU} we get $K_{\mathcal{U}}(p,q)=$
\begin{equation}\label{bergU1} 
 \dfrac{\mathlarger{\sum\limits_{\beta \in \mathbb{N}^2}}D_{k_1}(\beta_1)D_{k_1}(k_2 \beta_1 + k_1 \beta_2 +k_1+k_2-2k_1 k_2 -1)t_1^{\ell_1\beta_1-a_1^2\beta_2+\ell_1-a_1^2-1} t_2^{\ell_2\beta_1+a_1^1\beta_2+\ell_2+a_1^1-1}}{\pi^2 k_1 \left(t_1^{-k_2 a_1^2}t_2^{k_2 a_1^1}-t_1^{\ell_1 k_1}t_2^{\ell_2 k_1}\right)^2 \left(1-t_1^{-a_1^2}t_2^{a_1^1} \right)^2},
\end{equation}
where $t_j=p_j\overline{q_j}$, $j=1,2$, and the sum in \eqref{bergU1} is finite and $\beta_1$ and $\beta_2$ satisfy \eqref{nbeta111} and \eqref{nbeta222}.
In order to simplify the denominator, since $\ell_1 a_1^1 +\ell_2 a_1^2=1$, we have
\begin{eqnarray}\nonumber
\ell_1 k_1+k_2 a_1^2 &=& \ell_1\det A + h a_1^2 = \ell_1(a_1^1 a_2^2 -a_1^2 a_2^1)+ (\ell_1 a_2^1 +\ell_2 a_2^2)a_1^2\\
&=& \ell_1 a_1^1 a_2^2 + \ell_2 a_2^2 a_1^2 = a_2^2 (\ell_1 a_1^1 + \ell_2 a_1^2) = a_2^2, \label{cond1}
\end{eqnarray}
and

\begin{eqnarray}\nonumber
\ell_2 k_1 -k_2 a_1^1 &=& \ell_2 \det A-ha_1^1 = \ell_2(a_1^1 a_2^2 -a_1^2 a_2^1) - (\ell_1 a_2^1 + \ell_2 a_2^2)a_1^1 \\
&=&  -\ell_2 a_1^2 a_2^1 -\ell_1 a_2^1 a_1^1 = -a_2^1 (\ell_2 a_1^2 + \ell_1 a_1^1) = -a_2^1. \label{cond2}
\end{eqnarray}
Working on the denominator in \eqref{bergU1}, and using \eqref{cond1} and \eqref{cond2} we have
\begin{eqnarray}\nonumber
\left(t_1^{-k_2 a_1^2}t_2^{k_2 a_1^1}-t_1^{\ell_1 k_1}t_2^{\ell_2 k_1}\right)^2 \left(1-t_1^{-a_1^2}t_2^{a_1^1} \right)^2 &=& t_1^{-2k_2 a_1^2 - 2a_1^2}t_2^{2k_2 a_1^1}  \left(1-t_1^{\ell_1 k_1 +k_2 a_1^2}t_2^{\ell_2 k_1 -k_2 a_1^1}\right)^2 \left(t_1^{a_1^2}-t_2^{a_1^1} \right)^2 \\ \nonumber
&=& t_1^{-2k_2 a_1^2 - 2a_1^2}t_2^{2k_2 a_1^1}  \left(1-t_1^{a_2^2}t_2^{-a_2^1}\right)^2 \left(t_1^{a_1^2}-t_2^{a_1^1} \right)^2 \\ \label{bergU2}
&=& t_1^{-2k_2 a_1^2 - 2a_1^2}t_2^{2k_2 a_1^1 -2a_2^1}  \left(t_2^{a_2^1}-t_1^{a_2^2}\right)^2 \left(t_1^{a_1^2}-t_2^{a_1^1} \right)^2.
\end{eqnarray}

Plugging \eqref{bergU2} in \eqref{bergU1}, we get

\begin{equation}\label{bergU33}
K_{\mathcal{U}}(p,q)= \dfrac{\mathlarger{\sum\limits_{\gamma \in \mathbb{Z}^2}}D_{k_1}(\beta_1)D_{k_1}(\beta_2 k_1 + \beta_1 k_2 +k_1+k_2-2k_1 k_2 -1)t_1^{\gamma_1} t_2^{\gamma_2}}{\pi^2 k_1 \left(t_2^{a_2^1}-t_1^{a_2^2}\right)^2 \left(t_1^{a_1^2}-t_2^{a_1^1} \right)^2},
\end{equation}
where
\begin{eqnarray}\nonumber
\gamma_1 &=& \ell_1\beta_1- a_1^2 \beta_2 +\ell_1 + 2k_2 a_1^2 + a_1^2 -1, \\ \nonumber
\gamma_2 &=& \ell_2 \beta_1 + a_1^1 \beta_2 + \ell_2 - 2 k_2 a_1^1 + a_1^1 + 2a_2^1 -1.
\end{eqnarray}
and the sum in \eqref{bergU33} is finite and $\beta_1$ and $\beta_2$ satisfy \eqref{nbeta1} and \eqref{nbeta2}.

Solving the expressions $\gamma_1$ and $\gamma_2$ for $\beta_1$ and $\beta_2$, we get
\begin{eqnarray}\label{beta11}
\beta_1 &=& a_1^1 \gamma_1 + a_1^2 \gamma_2 -2a_1^1 a_1^2 +a_1^1 +a_1^2 -2a_1^2 a_2^1 -1, \\ \label{beta22}
\beta_2 &=& -\ell_2 \gamma_1 + \ell_1 \gamma_2 + \ell_2 a_1^2 - \ell_1 a_1^1 +2 k_2 - \ell_2 + \ell_1 - 2 a_2^1 \ell_1.
\end{eqnarray}

Define $\eta =\beta_1 k_2 + \beta_2 k_1 + k_1 + k_2 -2 k_1 k_2 -1$. Plugging the values of $\beta_1$ and $\beta_2$ above in  $\eta$ and using \eqref{cond1} and \eqref{cond2} we get
\begin{dmath} \label{eta}
\eta = \beta_1 k_2 + \beta_2 k_1 + k_1 + k_2 -2 k_1 k_2 -1  \\ \nonumber
     = \left(a_1^1 \gamma_1 + a_1^2 \gamma_2 -2a_1^1 a_1^2 +a_1^1 +a_1^2 -2a_1^2 a_2^1 -1 \right)k_2 +     
\left(-\ell_2 \gamma_1 + \ell_1 \gamma_2 + \ell_2 a_1^2 - \ell_1 a_1^1 +2 k_2 - \ell_2 + \ell_1 -2 a_2^1 \ell_1 \right)k_1     
      + k_1 + k_2 -2 k_1 k_2 -1  \\
     = \gamma_1 (-\ell_2 k_1 +k_2 a_1^1) + \gamma_2 (\ell_1 k_1 + k_2 a_1^2) + \ell_2 a_1^2 k_1 - \ell_1 a_1^1 k_1 - \ell_2 k_1 + \ell_1 k_1  - 2 a_2^1 \ell_1 k_1 -2a_1^1 a_1^2 k_2 + a_1^1 k_2 +a_1^2 k_2 -2a_1^2 a_2^1 k_2 + k_1 -1 \\
     = a_2^1 \gamma_1 + a_2^2 \gamma_2  +\ell_2 a_1^2 k_1 - {\ell_1 a_1^1 k_1} - {\ell_2 k_1} + {\ell_1 k_1} - {2a_2^1 \ell_1 k_1} - {2a_1^1 a_1^2 k_2} + {a_1^1 k_2} + {a_1^2 k_2} - {2a_1^2 a_2^1 k_2} + k_1 -1 \\
    = a_2^1 \gamma_1 + a_2^2 \gamma_2  + {\ell_2 a_1^2 k_1} - {\ell_1 a_1^1 k_1} + {a_2^1} + {a_2^2} {- 2a_2^1 a_2^2} - {2a_1^1 a_1^2 k_2} + k_1 -1 \\
    = a_2^1 \gamma_1 + a_2^2 \gamma_2 - a_1^2 a_2^1 - a_1^1 a_2^2 +a_2^1 +a_2^2 -2 a_2^1 a_2^2 + k_1 -1 \\
    = a_2^1 \gamma_1 + a_2^2 \gamma_2 -2a_1^2 a_2^1 +a_2^1 +a_2^2 -2 a_2^1 a_2^2 -1.
\end{dmath}


Plugging \eqref{beta11}-\eqref{eta} in \eqref{nbeta1} and \eqref{nbeta2} and using the fact that $k_1 = \det(A)$ we get
\begin{eqnarray}\nonumber
0 \leq a_1^1 \gamma_1 + a_1^2 \gamma_2 -2a_1^1 a_1^2 +a_1^1 +a_1^2 -2a_1^2 a_2^1 -1 \leq 2k_1-2,\\ \nonumber
0 \leq a_2^1 \gamma_1 + a_2^2 \gamma_2 -2a_1^2 a_2^1 +a_2^1 +a_2^2 -2 a_2^1 a_2^2 -1 \leq 2k_1 -2.
\end{eqnarray}
i.e.
\begin{eqnarray}\label{mn1}
m_1 \leq a_1^1 \gamma_1 + a_1^2 \gamma_2  \leq n_1,\\ \label{mn2}
m_2 \leq a_2^1 \gamma_1 + a_2^2 \gamma_2  \leq n_2.
\end{eqnarray}
where
\begin{eqnarray}\label{mnmn}
\begin{aligned}
m_1 &=& 2a_1^1 a_1^2 - a_1^1 - a_1^2 + 2a_1^2 a_2^1 +1, \quad m_2 = 2 a_1^2 a_2^1 - a_2^1 - a_2^2 + 2 a_2^1 a_2^2 +1,\\
n_1 &=& 2a_1^1 a_2^2 -a_1^1 -a_1^2 +2a_1^1 a_1^2 -1, \quad n_2 = 2a_1^1 a_2^2 -a_2^1 -a_2^2 + 2 a_2^1 a_2^2 -1.
\end{aligned}
\end{eqnarray}
Multiply \eqref{mn1} by the negative integer $(-a_2^2)$ and switching the upper and lower bounds then adding to \eqref{mn2} multiplied by the positive integer $(a_1^2)$ we get
\begin{equation}\label{jan}
m_2 a_1^2 -n_1 a_2^2 \leq -\det(A)\cdot \gamma_1 \leq n_2 a_1^2 -m_1 a_2^2.
\end{equation}
Dividing \eqref{jan} by the negative quantity $-\det(A)$, we get
\begin{equation}\label{jan1}
\dfrac{n_2 a_1^2 -m_1 a_2^2}{-\det(A)} \leq  \gamma_1 \leq \dfrac{m_2 a_1^2 -n_1 a_2^2}{-\det(A)}.
\end{equation}
Similarly, we multiply \eqref{mn1} by the negative integer $(-a_2^1)$ and switching the upper and lower bounds then adding to \eqref{mn2} multiplied by the positive integer $(a_1^1)$, we get
\begin{equation}\label{jan2}
m_2 a_1^1 -n_1 a_2^1 \leq \det(A)\cdot \gamma_2 \leq n_2 a_1^1 - m_1 a_2^1.
\end{equation}
Dividing \eqref{jan2} by $\det(A)>0$, we get
\begin{equation}\label{jan3}
\dfrac{m_2 a_1^1 -n_1 a_2^1}{\det(A)} \leq  \gamma_2 \leq \dfrac{n_2 a_1^1 - m_1 a_2^1}{\det(A)}.
\end{equation}
Using \eqref{mnmn}, we evaluate the lower and the upper bounds of $\gamma_1$ and $\gamma_2$ in \eqref{jan1} and \eqref{jan3} to get
\begin{eqnarray}\label{jan4}
\dfrac{n_2 a_1^2 -m_1 a_2^2}{-\det(A)} &=& -1 + \dfrac{a_1^2 +a_2^2}{\det(A)} > -1,\\ \label{jan5}
\dfrac{m_2 a_1^2 -n_1 a_2^2}{-\det(A)} &=& 2a_1^2 +2a_2^2 -1 - \dfrac{a_1^2 + a_2^2}{\det(A)} < 2a_1^2 +2a_2^2 -1,\\ \label{jan6}
\dfrac{m_2 a_1^1 -n_1 a_2^1}{\det(A)} &=& -1 + \dfrac{a_1^1 + a_2^1}{\det(A)} > -1,\\ \label{jan7}
\dfrac{n_2 a_1^1 -m_1 a_2^1}{\det(A)} &=& 2 a_1^1 + 2 a_2^1  - 1 - \dfrac{a_1^1 + a_2^1}{\det(A)} < 2 a_1^1 + 2 a_2^1  - 1.
\end{eqnarray}
Plugging \eqref{jan4}-\eqref{jan5} in \eqref{jan1}, and \eqref{jan6}-\eqref{jan7} in \eqref{jan3} and using the fact that $\gamma_1$ and $ \gamma_2$ are integers,  we get
\begin{eqnarray}\nonumber
0 \leq \gamma_1 \leq 2a_1^2 +2a_2^2 -2, \\ \nonumber
0 \leq \gamma_2 \leq 2 a_1^1 + 2 a_2^1  - 2.
\end{eqnarray}

This shows that the range of the values of $\gamma_1$ and $\gamma_2$ are nonnegatives and the sum in \eqref{bergU33} is finite, and proved \eqref{range1}.

Plugging \eqref{eta} in \eqref{bergU1} and since $k_1=\det(A)$, we get

\begin{equation}\nonumber
K_{\mathcal{U}}(p,q)= \dfrac{1}{\pi^2 \det(A)}\cdot \dfrac{\mathlarger{\sum\limits_{\gamma \in \mathbb{N}^2}}D_{\det(A)}(\nu)D_{\det(A)}(\eta)t_1^{\gamma_1} t_2^{\gamma_2}}{ \left(t_2^{a_2^1}-t_1^{a_2^2}\right)^2 \left(t_1^{a_1^2}-t_2^{a_1^1} \right)^2},
\end{equation}
where
\begin{eqnarray} \nonumber
\nu &=& a_1^1 \gamma_1 + a_1^2 \gamma_2 -2a_1^1 a_1^2 +a_1^1 +a_1^2 -2a_1^2 a_2^1-1,\\ \nonumber
\eta &=& a_2^1 \gamma_1 + a_2^2 \gamma_2 -2a_1^2 a_2^1 +a_2^1 +a_2^2 -2 a_2^1 a_2^2 -1.
\end{eqnarray}
By definition of $N(\cdot ,\cdot )$, defined in \eqref{funnn}, on the matrix $A$
\begin{eqnarray}\nonumber
N(A,\gamma) &=& a_1^1 \gamma_1 + a_1^2 \gamma_2 - 2a_1^1 a_1^2 + a_1^1 +a_1^2 -1 -\perm(A) + \abs{\det(A)}.
\end{eqnarray}
Since $\det(A) >0$, we have
\begin{eqnarray}\nonumber
N(A,\gamma) &=& a_1^1 \gamma_1 + a_1^2 \gamma_2 - 2a_1^1 a_1^2 + a_1^1 +a_1^2 -1 -\left(a_1^1 a_2^2 +a_1^2 a_2^1 \right) + \left(a_1^1 a_2^2 - a_1^2 a_2^1 \right) \\ \nonumber
&=& a_1^1 \gamma_1 + a_1^2 \gamma_2 - 2a_1^1 a_1^2 + a_1^1 +a_1^2 -2a_1^2 a_2^1-1.
\end{eqnarray}
It follows that $\nu = N(A,\gamma).$ Similarly,
\begin{eqnarray} \nonumber
N(AR,\gamma) &=& a_2^1 \gamma_1 + a_2^2 \gamma_2 -2a_2^1 a_2^2 +a_2^1 +a_2^2 -1 - \perm(AR) +\abs{\det(AR)}.\\ \nonumber
&=& a_2^1 \gamma_1 + a_2^2 \gamma_2 -2a_2^1 a_2^2 +a_2^1 +a_2^2 -1 - \left( a_2^1 a_1^2 +a_1^1 a_2^2 \right) + \abs{ a_2^1 a_1^2 - a_1^1 a_2^2 } \\ \nonumber
&=& a_2^1 \gamma_1 + a_2^2 \gamma_2 -2a_2^1 a_2^2 +a_2^1 +a_2^2 -1 - \left( a_2^1 a_1^2 +a_1^1 a_2^2 \right) + \abs{-\det(A) }\\ \nonumber
&=& a_2^1 \gamma_1 + a_2^2 \gamma_2 -2a_2^1 a_2^2 +a_2^1 +a_2^2 -1 - \left( a_2^1 a_1^2 +a_1^1 a_2^2 \right) + \det(A) \\ \nonumber
&=& a_2^1 \gamma_1 + a_2^2 \gamma_2 -2a_2^1 a_2^2 +a_2^1 +a_2^2 -1 - \left( a_2^1 a_1^2 +a_1^1 a_2^2 \right) + a_1^1 a_2^2 - a_1^2 a_2^1 \\ \nonumber
&=& a_2^1 \gamma_1 + a_2^2 \gamma_2 -2a_2^1 a_2^2 +a_2^1 +a_2^2 -1  -2 a_1^2 a_2^1 \\ \nonumber
&=& a_2^1 \gamma_1 + a_2^2 \gamma_2 -2a_1^2 a_2^1 +a_2^1 +a_2^2 -2 a_2^1 a_2^2 -1.
\end{eqnarray} 
It follows that $N(AR,\gamma)= \eta.$ We can see the full formula
\begin{equation}\nonumber
K_{\mathcal{U}}(z,w)= \dfrac{1}{\pi^2 \det(A)}\cdot \dfrac{g(t_1,t_2)}{ \left(t_2^{a_2^1}-t_1^{a_2^2}\right)^2 \left(t_1^{a_1^2}-t_2^{a_1^1} \right)^2}
\end{equation}
with
\begin{equation}\nonumber
g(t_1,t_2) = \mathlarger{\sum\limits_{\gamma \in \mathbb{N}^2}}D_{\det(A)}(N(A,\gamma))D_{\det(A)}(N(AR,\gamma))t_1^{\gamma_1} t_2^{\gamma_2}.
\end{equation}
This completes the proof of Theorem \ref{mainthm1} and Theorem \ref{mainthm2}.

\bibliographystyle{plain}
\bibliography{mybib}

\end{document}